\DeclareMathOperator{\rng}{\mathrm{rng}}
\DeclareMathOperator{\supp}{\mathrm{supp}}
\newcommand{\Aut}{\mathrm{Aut}}
  \newcommand{\R}{\mathbb R}
  \newcommand{\N}{\mathbb N}
  \newcommand{\LL}{\mathrm L}
 \newcommand{\dom}{\mathrm{dom}\;}
  \newcommand{\inv}{^{-1}}
  \renewcommand{\leq}{\leqslant}
  \renewcommand{\geq}{\geqslant}
  \newcommand{\abs}[1]{\left\lvert #1\right\rvert}
  \newcommand{\impl}{\Rightarrow}
  \newcommand{\la}{\left\langle}
  \newcommand{\ra}{\right\rangle}
  \DeclareMathOperator{\diam}{\mathrm{diam}}
\newtheorem{thmi}{Theorem}
\newtheorem{thm}{Theorem}[section]
\newtheorem{cor}[thm]{Corollary}
\newtheorem{lem}[thm]{Lemma}
\newtheorem{prop}[thm]{Proposition}
\theoremstyle{definition}
\newtheorem{qu}[thm]{Question}
\newtheorem{df}[thm]{Definition}
\newtheorem*{rmq}{Remark}
\newtheorem{exemple}[thm]{Example}
\title{Bounded normal generation is not equivalent to topological bounded normal generation}
\author{Philip~A. Dowerk\footnote{Research supported by ERC CoG No. 614195 and by  ERC CoG No. 681207.}~ and François Le Maître\footnote{Research supported by Projet ANR-14-CE25-0004 GAMME.}}
\begin{document}

\maketitle

\begin{abstract}
We show that some derived $\LL^1$ full groups provide examples of non simple Polish groups with the topological bounded normal generation property. In particular, it follows that there are Polish groups with the topological bounded normal generation property but not the bounded normal generation property. 
\end{abstract}


\section{Introduction}

The study of how quickly the conjugacy class of a non-central element generates the whole group has often turned out to be useful in understanding both the algebraic and topological structure of the given group. For example, such results on involutions by Broise \cite{MR0223900} were crucial in de la Harpe's proof of simplicity of projective unitary groups of some von Neumann factors \cite{MR535064}, while analogous results by Ryzhikov \cite{MR823423} are used in the proof automatic continuity of homomorphisms  for full groups of measure-preserving equivalence relations by Kittrell and Tsankov \cite{MR2599891}.

A group $G$ has \emph{bounded normal generation} (BNG) if the reunion of the conjugacy classes of any nontrivial element  and of its inverse generate $G$ in finitely many steps. Observe that any group with bounded normal generation is simple. Bounded normal generation already appears in Fathi's result that the group of measure-preserving transformations of a standard probability space is simple \cite{MR0486410}, but the name was only given recently in \cite{Dow15,DowThom15}.

Obtaining bounded normal generation with  a sharp number of steps is a crucial point of the seminal article \cite{MR1865975} by Liebeck and Shalev concerning bounds on the diameter of the Cayley graph of a finite simple group. Other applications of bounded normal generation can be found in the first named author's work together with Thom \cite{Dow15,DowThom15} concerning invariant automatic continuity and the uniqueness of Polish group topologies for projective unitary groups of certain operator algebras, in particular for type II$_1$ von Neumann factors.

The notion of bounded normal generation can be topologized: a topological group $G$ has \emph{topological bounded normal generation} if the conjugacy classes of every nontrivial element and of its inverse generate a dense subset of $G$ in finitely many steps. 
This definition was introduced in \cite{Dow15} to give a clean structure to the proof of bounded normal generation for projective unitary groups of II$_1$ factors: prove topological bounded normal generation via some finite-dimensional approximation methods in the strong operator topology, then use some commutator tricks and results on symmetries to obtain bounded normal generation. 

However, it is a priori not clear whether bounded normal generation and topological bounded normal generation are the same properties. Basic examples of non simple topologically simple groups such as the symmetric group over an infinite set or the projective unitary group of an infinite dimensional Hilbert space actually fail topological bounded normal generation (see Cor. \ref{cor: unitary und symmetric fail TBNG}).
In fact, this problem was left open in \cite{Dow15} and is the main point of this  article: we provide examples that separate bounded normal generation from its topological counterpart. Actually, we show that there are Polish groups which have topological bounded normal generation but which are not even simple.

The groups that serve our purposes are  called derived L$^1$ full groups, introduced by the second named author in \cite{l1fullgpsI} as a measurable analogue the derived groups of small topological full groups. Let us now state our main result, see Section \ref{sec_prelim} for definitions. 

\begin{thmi}[{see Thm.  \ref{thm: TBNG for amenable}}]\label{thm: hyperfinite has TBNG}
 The derived L$^1$ full group of any hyperfinite ergodic graphing has topological bounded normal generation. 
\end{thmi}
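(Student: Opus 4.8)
The plan is to establish topological bounded normal generation for the derived $\LL^1$ full group $[\RR]_1'$ of a hyperfinite ergodic graphing by exploiting the structure of hyperfiniteness to reduce the problem to a concrete combinatorial statement about dyadic permutations, then invoke a density argument.

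First I would recall that since the graphing is hyperfinite and ergodic, the equivalence relation $\RR$ it generates can be written as an increasing union $\RR = \bigcup_n \RR_n$ of finite sub-equivalence relations, and that the derived $\LL^1$ full group contains, as a dense subgroup, the union of the (finite permutation) groups attached to the $\RR_n$ — more precisely the elements supported on finitely many "levels" that permute the atoms of the corresponding partition while respecting measure. So the first step is to fix a nontrivial $g \in [\RR]_1'$ and reduce to showing that finitely many conjugates of $g^{\pm 1}$ have product-set dense in $[\RR]_1'$; by the density of the "dyadic" or "finite-level" subgroup, it suffices to approximate an arbitrary such finite-level element $h$ arbitrarily well by a bounded-length product of conjugates of $g$ and $g\inv$. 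The key point is that $g$ being nontrivial means it moves a set of positive measure: there is $\eps > 0$ and a set $A$ of measure $\geq \eps$ with $g(A) \cap A = \emptyset$ (after passing to a piece of $\supp g$, using that $g$ lies in the derived group so its "index map" vanishes and one can find such an $A$ inside a single $\RR$-class structure).

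The second and central step is a local-to-global conjugacy argument. Given the target finite-level element $h$, choose $n$ large enough that $h$ is $\RR_n$-measurable, and choose $m \geq n$ large enough that the atoms of the level-$m$ partition are small compared to $\eps$; then inside each $\RR_m$-class one has a large supply of "room", and one can find an element $k \in [\RR]_1$ that conjugates a suitable restriction of $g$ onto a transposition-like move between two atoms of the level-$m$ partition lying in the same $\RR$-class. Writing $h$ as a product of boundedly many such atom-level transpositions (the bound depends only on the combinatorics of $S_{2^{m-n}}$ acting on dyadic intervals, which is generated in a bounded number of steps by any single nontrivial permutation — this is exactly the finite analogue of BNG for symmetric groups, and where the uniform bound comes from), each of which is a conjugate of a restriction of $g$, I would assemble $h$ (up to a set of small measure, coming from the mismatch between $\supp g$ and the chosen atoms) as a product of a bounded number of $[\RR]_1$-conjugates of $g^{\pm 1}$. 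The $\LL^1$-norm control is needed here: the conjugating elements $k$ must be taken in $[\RR]_1$ with controlled cost so that the resulting product genuinely lies in the $\LL^1$ full group and approximates $h$ in the $\LL^1$ metric, not merely the uniform one.

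The main obstacle I expect is precisely this uniform bound on the number of steps, together with the compatibility of the $\LL^1$ constraint with the conjugation argument: one must ensure that the number of conjugates used to approximate $h$ does not depend on $h$ or on the approximation quality $\delta$, only on $g$. This forces a careful bookkeeping: the "transposition count" must be bounded independently of $m$, which works because reshuffling $2^{m-n}$ dyadic atoms within a class, while respecting the uniform measure, can always be done with a number of atom-transpositions bounded in terms of how thoroughly a single nontrivial permutation mixes — and each atom-transposition is obtained from $g$ by one conjugation plus possibly one multiplication, so the total length stays bounded. A secondary subtlety is handling the part of $\supp g$ that does not get "used": one conjugates $g$ by an element that is the identity off a small region, so the leftover moved mass is small, and it is absorbed into the $\delta$-approximation by iterating with geometrically decreasing $\eps$ if necessary, or more cleanly by first replacing $g$ with a commutator $[g, k]$ supported on a controlled set. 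Finally I would note that this argument uses hyperfiniteness only through the exhaustion $\RR = \bigcup \RR_n$ by finite relations and the resulting density of finite-level permutations, which is why the statement is clean for hyperfinite graphings; the general ergodic case is genuinely different and is handled (if at all) by other means in the paper.
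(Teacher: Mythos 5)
Your overall skeleton (use hyperfiniteness to produce a dense subgroup coming from an exhaustion by finite subrelations, then approximate elements of that subgroup by a bounded-length product of conjugates of $g^{\pm1}$) matches the paper's strategy, but the central mechanism has two genuine gaps. First, your building blocks are not legitimate elements of the product set: $(g^{\pm G})^{\cdot n}$ consists of products of \emph{full} conjugates of $g$ and $g\inv$, and every such conjugate has support of measure exactly $\mu(\supp g)$. An element that ``conjugates a suitable restriction of $g$ onto a transposition-like move between two atoms'' is not a conjugate of $g$, and if you instead conjugate $g$ itself so that part of it realizes the atom move, the unused part of the conjugate has measure close to $\mu(\supp g)$ once the level-$m$ atoms are small; your claim that ``the leftover moved mass is small'' is therefore false, and this garbage cannot be absorbed into the $\delta$-approximation. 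Your parenthetical remark about first replacing $g$ by a commutator is in fact the device that repairs this, and it is the heart of the paper's proof of Theorem \ref{thm: TBNG for amenable}: choosing an involution $U$ supported in a set $A\subseteq\supp T$ with $A\cap T(A)=\emptyset$ and $\mu(\supp U)=\mu(\supp T)/7$, the element $[T,U]=(TUT\inv)U\inv=T\cdot(UT\inv U\inv)$ is simultaneously an involution of support measure $\tfrac27\mu(\supp T)<1/3$ and an element of $(T^{\pm[\Phi]_1'})^{\cdot2}$. In your write-up this appears only as a ``secondary subtlety,'' not as the step that carries the argument.

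Second, the uniform bound you invoke does not exist in the form you state it: in $S_N$ there is no bound independent of $N$ on how many products of the conjugacy class of a fixed nontrivial permutation are needed (for a transposition one needs on the order of $N$ factors; covering numbers scale like $N$ divided by the support size). Concretely, your target $h$ decomposes into on the order of $2^{m-n}$ atom transpositions, which is unbounded as $m\to\infty$, while the number of factors must be bounded in terms of $g$ alone; these requirements are incompatible when the building blocks have support of measure about $2\cdot2^{-m}$. The bound can only come from building blocks whose support has measure comparable to $\mu(\supp g)$, and making this work requires ergodicity, which your proposal never really uses: Lemmas \ref{lem:closure conjugacy class contains involutions} and \ref{lem: smaller support} show that the closure of the $[\Phi]_1'$-conjugacy class of the involution $[T,U]$ contains \emph{every} involution of the same support measure (and, taking two factors, everything up to twice that measure), so that cutting an arbitrary involution $V$ into $V$-invariant pieces of measure $\mu(\supp[T,U])$ expresses $V$ as a product of at most $1+\lceil 7/(2\mu(\supp T))\rceil$ elements of that closure; combined with the dense subgroup of products of two involutions this gives the normal generation function of Theorem \ref{thm: TBNG for amenable}. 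A smaller point: for a general hyperfinite graphing the finite subrelations $\mathcal R_n$ need not satisfy $[\mathcal R_n]\subseteq[\Phi]_1$, since their classes may have unbounded $\Phi$-diameter; the Borel--Cantelli shrinking argument in Theorem \ref{thm: dense products of 2 invol} is what makes your ``dense finite-level subgroup'' legitimate, so this step also needs more care than your sketch suggests.
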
 

In the above theorem we can moreover estimate how many products of conjugacy classes of a non-trivial element and its inverse in the derived L$^1$ full group we need to generate a dense subset, see Thm. \ref{thm: TBNG for amenable}.  

We do not know wether one can remove the hyperfiniteness asumption in Theorem \ref{thm: hyperfinite has TBNG}. As a concrete example of this result, one can consider the derived $\LL^1$ full group of a measure-preserving ergodic transformation, which was already known to be non-simple but topologically simple \cite{l1fullgpsI}. Using reconstruction-type results, we can show that this provides many examples of non simple groups with topological bounded normal generation.

\begin{thmi}[{see Cor. \ref{cor: continuum counterexamples}}]\label{thm: uncountably many cex}
There are uncountably many pairwise non-isomorphic Polish groups which have topological bounded normal generation but fail to be simple (in particular they fail bounded normal generation).
\end{thmi}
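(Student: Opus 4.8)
The plan is to combine Theorem \ref{thm: TBNG for amenable} with known non-simplicity and with a reconstruction argument. Fix a standard probability space and, for each ergodic measure-preserving transformation $T$, let $G_T$ denote its derived $\LL^1$ full group. Since the single transformation $T$ generates a hyperfinite ergodic graphing (its orbit equivalence relation $\RR_T$ is ergodic because $T$ is, and hyperfinite because it is generated by a $\Z$-action), Theorem \ref{thm: TBNG for amenable} applies, so $G_T$ has topological bounded normal generation. On the other hand $G_T$ is topologically simple but \emph{not} simple \cite{l1fullgpsI}; since bounded normal generation forces simplicity (as recalled in the introduction), $G_T$ fails bounded normal generation. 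Thus every $G_T$ is already a single witness for the statement, and the whole content of Corollary \ref{cor: continuum counterexamples} is to produce uncountably many pairwise non-isomorphic such groups.

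For that, the key input is a reconstruction principle: the topological-group isomorphism type of $G_T$ should determine $T$ up to flip-conjugacy (conjugacy of $T$ with $S$ or with $S\inv$). I would obtain this in three steps. First, an automatic continuity statement for derived $\LL^1$ full groups (in the spirit of the Kittrell--Tsankov theorem for full groups, and available for $\LL^1$ full groups) upgrades any abstract group isomorphism $G_T \cong G_S$ to a homeomorphism, so that "isomorphic as Polish groups" and "isomorphic as abstract groups" coincide here. Second, a reconstruction theorem recovers the ambient $\LL^1$ full group $[\RR_T]_1$, together with the bi-Lipschitz class of its $\LL^1$ metric, from the pair consisting of $G_T$ and its Polish topology. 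Third, one invokes Belinskaya's theorem, which guarantees that an ergodic $\Z$-action is remembered up to flip-conjugacy by its $\LL^1$ full group: the transformations of $[\RR_T]_1$ generating $\RR_T$ are exactly those flip-conjugate to $T$. Chaining these implications, $G_T \cong G_S$ forces $T$ and $S$ to be flip-conjugate.

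Finally it remains to exhibit a continuum of pairwise non-flip-conjugate ergodic measure-preserving transformations, which is classical. Entropy is invariant under conjugacy and under $T \mapsto T\inv$, hence is a flip-conjugacy invariant, and Bernoulli shifts realize every value in an interval of entropies; so choosing a family $(T_t)_{t \in I}$ with $I \subseteq (0,\infty)$ of cardinality the continuum and $T_t$ of entropy $t$, the $T_t$ are pairwise non-flip-conjugate. (Alternatively, irrational rotations $R_\alpha$, distinguished by their point spectra, would also do the job.) Consequently $(G_{T_t})_{t \in I}$ is a family of continuum-many pairwise non-isomorphic Polish groups, each having topological bounded normal generation and none being simple; in particular none has bounded normal generation, which proves the statement.

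I expect the main obstacle to be the middle step, the group-topological reconstruction of $T$ (up to flip-conjugacy) from $G_T$ alone. Unlike in Dye's theorem --- under which all ergodic $\Z$-actions share the same orbit equivalence relation and hence, at the level of ordinary full groups, the same group --- the distinguishing information here is carried by the $\LL^1$ structure hard-wired into the topology of $G_T$. The delicate point is to check that neither passing from the full group $[\RR_T]_1$ to its commutator subgroup, nor taking the closure to form $G_T$, destroys this information, and that the bi-Lipschitz class of the $\LL^1$ metric can be characterized purely in terms of the Polish group $G_T$ (for instance via word metrics on suitable generating sets, or via its large-scale geometry), so that Belinskaya's rigidity can be brought to bear.
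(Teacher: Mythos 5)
Your first and last steps are fine and essentially match the paper: TBNG comes from Theorem \ref{thm: TBNG for amenable} because a single ergodic transformation generates an ergodic hyperfinite graphing, non-simplicity of $[T]_1'$ is quoted from \cite{l1fullgpsI} (this is Cor. \ref{cor: derived L1 of T has TBNG but not simple}), and producing continuum many pairwise non-flip-conjugate ergodic transformations via entropy of Bernoulli shifts (or rotation numbers, as the paper does in Cor. \ref{cor: continuum counterexamples}) is unproblematic. The genuine gap is exactly the step you flag yourself: you never establish that $G_T\cong G_S$ (as abstract or topological groups) forces $T$ and $S$ to be flip-conjugate. Your proposed route --- (a) an automatic continuity theorem for derived $\LL^1$ full groups in the spirit of Kittrell--Tsankov \cite{MR2599891}, and (b) a reconstruction of the ambient $\LL^1$ full group together with the bi-Lipschitz class of its $\LL^1$ metric from the Polish group $G_T$ (via word metrics or large-scale geometry) --- is not carried out, and neither ingredient is available off the shelf: automatic continuity is not known (and not needed) for these groups, and recovering the metric structure from the topology is precisely the ``delicate point'' you leave open. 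As it stands, the chain of implications distinguishing the groups $G_{T_t}$ is asserted rather than proved.

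The paper closes this gap by a different and more direct mechanism that avoids both topology-recovery and metric-recovery. Fremlin's reconstruction theorem, applied as in \cite{l1fullgpsI}, shows that \emph{every abstract} group isomorphism $\rho:[T_a]_1'\to[T_b]_1'$ is spatial: it is implemented by a non-singular transformation $S$, which is then measure-preserving by ergodicity. One then does not need to recover $[\mathcal R_{T_a}]_1$ or its metric at all; instead one writes $X=A\sqcup T_a(A)\sqcup B\sqcup T_a(B)\sqcup T_a^2(B)$ and decomposes $T_a$ as a cut-and-paste of five involutions $U_1,\dots,U_5$, each lying in $[T_a]_1'$. Since $SU_iS\inv=\rho(U_i)\in[T_b]_1'\leq[T_b]_1$, the transformation $ST_aS\inv$ is obtained by cutting and pasting finitely many elements of $[T_b]_1$, hence lies in $[T_b]_1$; symmetrically $T_b\in[ST_aS\inv]_1$, so the two transformations share their orbits and Belinskaya's theorem yields flip-conjugacy. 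If you want to salvage your outline, replace steps (a) and (b) by this spatiality-plus-involution argument; without it (or an actual proof of your metric-reconstruction step), the claim that the groups are pairwise non-isomorphic is unsupported.
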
 

The article is structured as follows: in Section \ref{sec_prelim} we provide preliminaries around bounded normal generation and define derived L$^1$ full groups of graphings, in Section \ref{sec_invo} we show that for hyperfinite graphings these groups contain a dense subgroup in which every element is a product of two involutions, in Section \ref{sec_TBNG} we use this to prove Theorem \ref{thm: hyperfinite has TBNG} (see Thm. \ref{thm: TBNG for amenable}) and finally in Section \ref{sec_recon} we prove a reconstruction theorem for derived L$^1$ full groups and use it to prove Theorem \ref{thm: uncountably many cex} (see Cor. \ref{cor: continuum counterexamples}).

\section{Preliminaries}\label{sec_prelim}

\subsection{Topological bounded normal generation}\label{sec:define TBNG}

If $G$ is a group and $g\in G$, we write $g^{\pm G}=\{hgh\inv: h\in G\}\cup\{hg\inv h\inv: h\in G\}$. Given a subset $A$ of $G$, we moreover write $A^{\cdot n}$ for the set of products of $n$ elements of $A$, that is 
$$A^{\cdot n}=\{a_1\cdots a_n: a_1,...,a_n\in A\}.$$

The following definitions are taken from \cite{Dow15, DowThom15}.

\begin{df}
A  group $G$ has \textbf{bounded normal generation} (BNG) if for every non-trivial $g\in G$, there is $n(g)\in\N$ such that $$\left(g^{\pm G}\right)^{\cdot n(g)}=G.$$
A function $n:G\setminus\{1\}\rightarrow \mathbb{N}$ satisfying the above assumption  is then called a \textbf{normal generation function}. 
\end{df}

Observe that BNG implies simplicity. An example of a simple group without BNG is given by finitely supported permutations of signature $1$ on an infinite set. 
Let us now define the topological version of BNG.

\begin{df}
A topological group $G$ has \textbf{topological bounded normal generation} (TBNG) if for every non-trivial $g\in G$, there is $n(g)\in\N$ such that $$\overline{(g^{\pm G})^{\cdot n(g)}}=G.$$
We still call a function $n:G\setminus\{1\}\rightarrow\mathbb{N}$ satisfying the above assumption a normal generation function.
\end{df}

The purpose of this paper is to exhibit examples of non-simple topological groups with TBNG.
Let us first explain why the first examples of non simple topologically simple groups which come to mind actually fail TBNG.

\begin{df}
Let $G$ be a topological group. A \textbf{lower semi-continuous invariant length function} on $G$ is a map 
$l:G\to[0,+\infty]$ such that
\begin{enumerate}[(i)]
\item $l(1_G)=0$;
\item $l(g\inv)=l(g)$ for all $g\in G$;
\item $l(gh)\leq l(g)+l(h)$ for all $g,h\in G$;
\item $l(ghg\inv)=l(h)$  for all $g,h\in G$;
\item for all $r\geq 0$, the set $l\inv([0,r])$ is closed.
\end{enumerate}
Such a function is \textbf{unbounded} if for all $K\in\N$ there is $g\in G$ such that $l(g)\in]K,+\infty[$.
\end{df}

\begin{exemple}
On the symmetric group over a discrete set $X$ endowed with the pointwise convergence topology, one can let $l(g)$ be the cardinality of the support of $g$. If $X$ is infinite, then such a length function is unbounded.

On the  unitary group of a Hilbert space endowed with the strong topology, one can let $l(g)$ be the minimum over $\lambda\in\mathbb S^1$ of the rank of $(g-\lambda \mathrm{id}_{\mathcal H})$ or of the trace class norm of $(g-\lambda\mathrm{id}_{\mathcal H})$. If the Hilbert space is infinite dimensional, then such a length function is unbounded and quotients down to an unbounded lower semi-continuous length function on the projective unitary group. 
\end{exemple}

Observe that if we have an invariant length function on $G$ then the group of elements of finite length is a $F_\sigma$ normal subgroup of $G$. Moreover, we have the following straightforward proposition.

\begin{prop}
If a topological group $G$ admits an unbounded lower semi-continuous invariant length function, then it fails TBNG. 
\end{prop}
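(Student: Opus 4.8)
The plan is to show the contrapositive in spirit: assuming $G$ has an unbounded lower semi-continuous invariant length function $l$, I will produce, for every $n \in \N$, a nontrivial element $g_n$ (in fact one can use a single element if $l$ has values arbitrarily large, but it is cleanest to let $g$ range) witnessing failure of TBNG. Concretely, fix any $n \in \N$ and any nontrivial $g \in G$. I first observe that $l$ is bounded on the conjugacy class $g^{\pm G}$: by invariance (property (iv)) and symmetry (property (ii)), every element of $g^{\pm G}$ has length exactly $l(g)$. Hence by subadditivity (property (iii)), every element of $(g^{\pm G})^{\cdot n}$ has length at most $n\, l(g)$, i.e.\ $(g^{\pm G})^{\cdot n} \subseteq l\inv([0, n\,l(g)])$.

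Next I use lower semi-continuity: by property (v) the set $l\inv([0, n\,l(g)])$ is closed, so it contains the closure $\overline{(g^{\pm G})^{\cdot n}}$ as well. Now invoke unboundedness: there is some $h \in G$ with $l(h) > n\, l(g)$ (here one should note $l(g) < \infty$; if $l(g) = +\infty$ one can instead pick $h$ with $l(h)$ finite and larger than... — actually the clean move is: either $l(g) = +\infty$, in which case pick $h$ with $0 < l(h) < \infty$ by unboundedness applied to $K=0$ together with $l$ not being identically $0$ on nontrivial elements, or rather simply note that $l\inv([0,r])$ being a proper subset for suitable $r$). Let me streamline: since $l$ is unbounded, for the real number $r = n\,l(g)$ (which, if infinite, we may replace by any finite value exceeded by some $l(h)$) there exists $h \in G$ with $l(h) > r$, so $h \notin l\inv([0,r]) \supseteq \overline{(g^{\pm G})^{\cdot n}}$. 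Therefore $\overline{(g^{\pm G})^{\cdot n}} \neq G$. Since $n$ and the nontrivial $g$ were arbitrary, no normal generation function can exist, so $G$ fails TBNG.

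The only subtlety, and the place I would be most careful, is the case $l(g) = +\infty$ for the chosen $g$: then $n\,l(g) = +\infty$ and the containment $\overline{(g^{\pm G})^{\cdot n}} \subseteq l\inv([0,+\infty]) = G$ is vacuous. The fix is to choose $g$ judiciously at the outset: to witness failure of TBNG it suffices to exhibit \emph{one} nontrivial $g$ for which no $n$ works, and unboundedness guarantees elements of arbitrarily large finite length, so in particular there is a nontrivial $g$ with $0 < l(g) < \infty$ (take any $g$ with $l(g) \in \,]1,\infty[$, say). For this $g$, the argument above goes through cleanly for every $n$, since $n\,l(g)$ is finite and unboundedness then supplies $h$ with $l(h) > n\,l(g)$. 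This handles the obstacle and completes the proof; everything else is a direct unwinding of the five axioms.
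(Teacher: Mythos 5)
Your argument is correct and follows essentially the same route as the paper: pick (via unboundedness) a nontrivial $g$ with $0<l(g)<\infty$, note $(g^{\pm G})^{\cdot n}\subseteq l\inv([0,n\,l(g)])$ which is closed by lower semi-continuity, and use unboundedness again to see this closed set is proper. Your careful handling of the case $l(g)=+\infty$ is exactly the reason the paper also chooses $g$ with finite positive length at the outset, so the two proofs coincide.
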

\begin{proof}
Let $l$ be an unbounded lower semi-continuous invariant length function on $G$.
By unboundness we find $g\in G$ with $l(g)\in ]0,+\infty[$ (in particular $g$ is non-trivial). Fix $n\in\N$ and let $N=n\cdot l(g)$. Observe that $(g^{\pm G})^{\cdot n}\subseteq l\inv([0,N])$, and since the latter is closed we also have $\overline{(g^{\pm G})^{\cdot n}}\subseteq l\inv([0,N])$. Since $l$ is unbounded, $\displaystyle l\inv([0,N])$ is not equal to $G$ and we conclude that for every $n\in\N$ we have $\displaystyle\overline{(g^{\pm G})^{\cdot n}}\neq G$. So $G$ fails TBNG.
\end{proof}%

Applying this proposition to our two examples, we get the following.
\begin{cor}\label{cor: unitary und symmetric fail TBNG}
The symmetric group over an infinite discrete set fails TBNG for the topology of pointwise convergence. The projective unitary group of an infinite-dimensional Hilbert space fails TBNG for the strong topology.
\end{cor}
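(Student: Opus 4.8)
The plan is to reduce the Corollary to the Proposition that precedes it, so the only work is to exhibit the relevant unbounded lower semi-continuous invariant length functions on the two groups in question. Both candidates were already sketched in the Example; what remains is to verify that each genuinely satisfies the five axioms of a lower semi-continuous invariant length function and that it descends to the projective unitary group in the second case.

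First I would treat the symmetric group $\mathrm{Sym}(X)$ over an infinite discrete set $X$ with the pointwise-convergence topology. Define $l(g)=\abs{\supp(g)}\in[0,+\infty]$. Axioms (i)--(iv) are immediate: the identity has empty support; $g$ and $g\inv$ have the same support; $\supp(gh)\subseteq\supp(g)\cup\supp(h)$ gives subadditivity; and $\supp(ghg\inv)=g\cdot\supp(h)$ has the same cardinality as $\supp(h)$. For lower semi-continuity (v), fix $r\geq0$; if $r$ is not an integer the condition is vacuous up to replacing $r$ by $\lfloor r\rfloor$, so assume $r\in\N$ and suppose $g_i\to g$ pointwise with $\abs{\supp(g_i)}\leq r$ for all $i$. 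If $g$ moved more than $r$ points, pick $r+1$ distinct points $x_0,\dots,x_r$ with $g(x_j)\neq x_j$; pointwise convergence forces $g_i(x_j)=g(x_j)\neq x_j$ for $i$ large, so $g_i$ moves at least $r+1$ points, a contradiction. Hence $l\inv([0,r])$ is closed, and since $X$ is infinite $l$ takes arbitrarily large finite values, so it is unbounded. The Proposition then applies.

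Next I would treat the unitary group $\U(\H)$ of an infinite-dimensional Hilbert space $\H$ with the strong operator topology, using $l(g)=\min_{\lambda\in\cir}\rk(g-\lambda\,\id_\H)$ (the trace-class-norm variant is handled the same way, or one can simply use the rank version since that already suffices). Axioms (i)--(iv) are checked as above: $l(\id_\H)=0$ taking $\lambda=1$; conjugation-invariance and the symmetry under $g\mapsto g\inv$ (note $g\inv-\lambda\inv\id = -\lambda\inv g\inv(g-\lambda\id)$ has the same rank) are routine; and subadditivity follows from $\rk(A+B)\leq\rk A+\rk B$ applied to $gh-\lambda\mu\,\id = (g-\lambda\id)h + \lambda(h-\mu\id)$. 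The main obstacle, and the only step requiring genuine care, is lower semi-continuity in the strong topology: I would show that for fixed $r$, $\{g:l(g)\leq r\}$ is strongly closed by arguing that if $l(g_i)\leq r$ with witnessing scalars $\lambda_i$, then after passing to a subnet $\lambda_i\to\lambda\in\cir$ (compactness of the circle), each $g_i$ agrees with $\lambda_i\id$ on a closed subspace of codimension $\leq r$; strong convergence $g_i\to g$ then forces $g$ to agree with $\lambda\id$ on a limiting subspace of codimension $\leq r$ — here one uses that a strong limit of unitaries that are scalar on large subspaces is scalar on a large subspace, which can be made precise by choosing, for any finite set of test vectors, a common subspace on which a subnet is simultaneously close to $\lambda\,\id$. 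Finally, since every rank-one modification of the identity witnesses $l=1$ and $\H$ is infinite-dimensional, $l$ is unbounded; and because $l(\mu g)=l(g)$ for any scalar $\mu\in\cir$ (shift $\lambda$ by $\mu$), $l$ factors through the projective unitary group $\mathrm{PU}(\H)$ as an unbounded lower semi-continuous invariant length function there. Applying the Proposition to each of $\mathrm{Sym}(X)$ and $\mathrm{PU}(\H)$ completes the proof.
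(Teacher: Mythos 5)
Your proposal is correct and follows exactly the paper's route: the Corollary is obtained by applying the preceding Proposition to the two length functions already described in the Example, and your verifications of the axioms (including the descent to the projective unitary group) are the intended ones. The only step you leave slightly informal, strong closedness of $\{g: l(g)\leq r\}$ for the unitary group, is fine and can be tightened by noting that rank is lower semi-continuous for the strong operator topology and that $g_i-\lambda_i\,\mathrm{id}\to g-\lambda\,\mathrm{id}$ strongly after extracting a convergent subnet $\lambda_i\to\lambda$ in $S^1$.
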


\begin{rmq}
 The fact that the projective unitary group of an infinite-dimensional Hilbert space fails TBNG can also be seen via generalized projective $s$-numbers as in \cite[p. 80-81]{Dow15}.
\end{rmq}

Let us conclude this section by introducing another natural topological version of BNG. Define \textbf{strong topological bounded normal generation} (STBNG) for a topological group $G$ by asking that for every non-trivial $g\in G$, there is $n(g)\in\N$ such that $$\left(\overline{g^{\pm G}}\right)^{\cdot n(g)}=G.$$
Since we always have $\left(\overline{g^{\pm G}}\right)^{\cdot n}\subseteq \overline{(g^{\pm G})^{\cdot n}}$, STBNG implies TBNG.
In a compact group finite pointwise products of closed subsets are compact hence closed, so we actually always have $\left(\overline{g^{\pm G}}\right)^{\cdot n}= \overline{(g^{\pm G})^{\cdot n}}$. Hence STBNG is equivalent to TBNG for compact groups. 
However it is not true in general that products of closed sets are closed (for instance in $G=\R$ consider $A=B=\N\cup-\sqrt2 \N$), so the following question is natural.

\begin{qu}
Is it true in general that TBNG is equivalent to STBNG ?
\end{qu}

\subsection{Derived \texorpdfstring{$\LL^1$}{L1} full groups}

Let $(X,\mu)$ be a standard probability space. We will ignore null sets, in particular we identify two Borel functions on $(X,\mu)$ if they are the same up to a null set.
Given $A,B$ Borel subsets of $X$, a \textbf{partial isomorphism} of $(X,\mu)$ of \textbf{domain} $A$ and \textbf{range} $B$ is a Borel bijection $f: A\to B$ which is measure preserving for the measures induced by $\mu$ on $A$ and $B$ respectively. We denote by $\dom f=A$ its domain, and by $\rng f=B$ its range. Note that in particular, $\mu(\dom f)=\mu(\rng f)$. When $\dom f=\rng f=X$ we say that $f$ is a \textbf{measure-preserving transformation} and we denote by $\Aut(X,\mu)$ the group of such transformations.

A \textbf{graphing} is a countable set of partial isomorphisms of $(X,\mu)$, often denoted $\Phi=\{\varphi_1,...,\varphi_k,...\}$ where the $\varphi_k$'s are partial isomorphisms. It \textbf{generates} a \textbf{measure preserving equivalence relation} $\mathcal R_\Phi$, defined to be the smallest equivalence relation such that $\varphi(x)\; \mathcal R_\Phi \; x$ for every $\varphi\in \Phi$.

Every graphing $\Phi$ induces a natural graph structure on $X$ where we connect $x$ to $y$ whenever there is $\varphi\in\Phi$ such that $y=\varphi(x)$. The connected components of this graph are precisely the $\mathcal R_\Phi$ classes, and we thus have a \textbf{graph metric} $d_\Phi: \mathcal R_\Phi\to\N$ given by $$d_\Phi(x,y)=\min\{n\in \N: \exists \varphi_1,...,\varphi_n\in\Phi, \epsilon_1,...,\epsilon_n\in\{-1,+1\}, y=\varphi_n^{\epsilon_n}\circ\cdots\circ\varphi_1^{\epsilon_1}(x)\}.$$

The \textbf{full group} of the measure-preserving equivalence relation $\mathcal R_\Phi$ is the group  of all measure-preserving transformations $T$ such that for all $x\in X$ we have $(x,T(x))\in\mathcal R_\Phi$. We denote it by $[\mathcal R_\Phi]$.

The $\LL^1$ full group of the graphing $\Phi$ is the group of all $T\in[\mathcal R_\Phi]$ such that 
$$\int_Xd_\Phi(x,T(x))d\mu(x)<+\infty.$$
It is denoted by $[\Phi]_1$. It has a natural complete separable right-invariant metric $\tilde d_\Phi$ given by 
$$\tilde d_\Phi(T,U)=\int_Xd_\Phi(T(x),U(x))d\mu(x).$$
We refer the reader to \cite{l1fullgpsI} for proofs and more background.

\begin{df}The \textbf{derived} $\LL^1$ \textbf{full group} of a graphing $\Phi$ is the closure of the derived group of the $\LL^1$ full group of $\Phi$. It is denoted by $[\Phi]_1'$. 
\end{df}

A graphing is \textbf{aperiodic} if all its connected components are infinite.
Recall that all the involutions from $[\Phi]_1$ actually belong to $[\Phi]_1'$ as soon as $\Phi$ is aperiodic (see \cite[Lem. 3.10]{l1fullgpsI}). 
The following result will be strengthened in the next section for hyperfinite graphings.

\begin{thm}[{\cite[Thm. 3.13]{l1fullgpsI}}]\label{thm: topo gen by I_phi}
Let $\Phi$ be an aperiodic graphing. Then its derived $\LL^1$ full group $[\Phi]_1'$ is topologically generated by involutions. 
\end{thm}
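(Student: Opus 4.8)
**Proof plan for Theorem 3.13 (\cite[Thm. 3.13]{l1fullgpsI}): $[\Phi]_1'$ is topologically generated by involutions when $\Phi$ is aperiodic.**

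The plan is to show that the closed subgroup $H$ generated by all involutions of $[\Phi]_1$ contains a dense subgroup of $[\Phi]_1'$, and then conclude. Since every involution of $[\Phi]_1$ lies in $[\Phi]_1'$ (by \cite[Lem. 3.10]{l1fullgpsI}, using aperiodicity) and $[\Phi]_1'$ is closed, it suffices to prove that $H \supseteq [\Phi]_1'$, i.e., that $H$ contains the derived group $[[\Phi]_1,[\Phi]_1]$; indeed, the reverse inclusion $H \subseteq [\Phi]_1'$ is immediate from the former observation. So the whole task reduces to: \emph{every commutator $[S,T]$ with $S,T\in[\Phi]_1$ is a product of finitely many involutions lying in $H$} — and in fact it is enough to write each element of a generating set, or even just each commutator, as a (finite) product of involutions, or more weakly to approximate it.

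First I would recall the standard fact that in a full group an element $U$ supported on a set $A$ with $\mu(A) \leq \mu(X\setminus A)$ can be written as a product of two involutions: using aperiodicity, build a measure-preserving $V$ sending $A$ off itself into its complement within $\mathcal R_\Phi$-classes, and then $U = (V\sigma)(\sigma V\inv)$ for an appropriate symmetry $\sigma$ — the two factors being involutions. The subtlety is the $\LL^1$ condition: I must choose $V$ so that it lies in $[\Phi]_1$, i.e., with $\int_X d_\Phi(x,V(x))\,d\mu(x)<+\infty$, and so that the resulting involutions are in $[\Phi]_1$ as well. For this one approximates: given $U\in[\Phi]_1'$ and $\eps>0$, one finds $U'$ that agrees with $U$ outside a set of small measure and is supported on a set of measure $<1/2$ (using that $\LL^1$ full group elements are $\tilde d_\Phi$-approximated by elements moving only a small portion of $X$, cut along the graph metric), writes $U'$ as a product of two involutions in $[\Phi]_1$ via the construction above, and controls $\tilde d_\Phi(U,U')$.

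The key steps, in order: (1) reduce to showing each element of $[\Phi]_1'$ is a $\tilde d_\Phi$-limit of products of involutions of $[\Phi]_1$; (2) given $U\in[\Phi]_1'$ and $\eps>0$, truncate $U$ along the graph metric to get $U'$ close to $U$ whose support has measure $<1/2$ — here aperiodicity guarantees we can also shrink the support by conjugating/rewriting if needed; (3) invoke aperiodicity to transport the support of $U'$ off itself inside the equivalence relation by an element $V$ with finite $\LL^1$-length, controlling $\int d_\Phi(x,V(x))\,d\mu$ via the finite graph-distance travelled; (4) factor $U' = V\sigma\cdot\sigma V\inv$ (or a similar commutator-of-symmetries identity) into two involutions, checking both lie in $[\Phi]_1$. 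The main obstacle is step (3): unlike in the plain full group, one cannot move the support arbitrarily far — the transporting map must have integrable displacement, so the construction of $V$ has to be done componentwise along the graphing with explicit, summable control on how far each point travels, exploiting that within each infinite component one can greedily match $A$-mass to nearby free space. Once $V\in[\Phi]_1$ is secured, the factorization into involutions of $[\Phi]_1$ is routine, and density follows.
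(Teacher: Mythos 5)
Your step (1) reduction is fine, but the heart of the argument breaks down at steps (2) and (4). Step (2) is impossible: in the metric $\tilde d_\Phi$ one has $\tilde d_\Phi(U,U')\geq \mu(\{x: U(x)\neq U'(x)\})$, since distinct points of a class are at graph distance at least $1$. Hence an element of $[\Phi]_1'$ whose support has measure, say, $0.9$ (such elements exist: for aperiodic $\Phi$ one finds $A$ with $\mu(A)$ close to $1/2$ and $\varphi(A)\cap A=\emptyset$ for some $\varphi$, and the associated involution lies in $[\Phi]_1'$ by the quoted Lem.~3.10) cannot be $\tilde d_\Phi$-approximated by elements supported on a set of measure $<1/2$: they must disagree on a set of measure at least $0.4$. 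Conjugation preserves the measure of the support, so ``shrinking the support by conjugating/rewriting'' cannot repair this. Step (4) is also not correct as written: if $\sigma$ is an involution then $(V\sigma)(\sigma V^{-1})=V\sigma^{2}V^{-1}=\mathrm{id}$. The genuine standard trick only shows that an element acting as $U$ on $A$ and as a conjugated copy of $U^{-1}$ on a disjoint set is a product of two involutions; writing $U$ itself (identity off its support) as a product of two involutions amounts to conjugating $U$ to $U^{-1}$ by an involution, which is not automatic --- indeed the present paper treats the two-involution statement as delicate even in the hyperfinite case, where it is obtained via finite subequivalence relations of uniformly bounded $\Phi$-diameter (Thm.~\ref{thm: dense products of 2 invol}), and explicitly leaves open whether every element of $[\Phi]_1'$ is a product of boundedly many involutions.

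Step (3) fails as well, and this is specific to the $\LL^1$ setting: a set of small measure need not be movable off itself by any element of $[\Phi]_1$. Take $\Phi=\{T\}$ with $T$ ergodic and let $A$ consist, along orbits (using Rokhlin towers), of blocks of $4^n$ consecutive points carrying total measure $2^{-n}$, summed over $n$. Any $V$ with $V(A)\cap A=\emptyset$ must move the middle of each block by roughly $4^n$, so $\int_X d_\Phi(x,V(x))\,d\mu(x)\geq \sum_n c\,2^{-n}4^n=\infty$; yet the involution $U'$ swapping adjacent pairs inside each block is supported on $A$, lies in $[\Phi]_1$ (its cost is just $\mu(A)$), and hence in $[\Phi]_1'$. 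So no ``greedy matching with summable control'' exists for this $U'$, even though its support is small. Finally, note that this paper does not contain a proof of the statement --- it is quoted from \cite{l1fullgpsI} --- but as it stands your outline is not a proof: the workable routes in this area avoid transporting supports and instead approximate inside full groups of finite subequivalence relations with uniformly bounded diameter, as in Section~\ref{sec_invo}.
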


\section{Products of two involutions in the derived \texorpdfstring{$\LL^1$}{L1}  full group of hyperfinite graphings}\label{sec_invo}

\begin{df}
A measure-preserving equivalence relation is called \textbf{finite} if all its equivalence classes are finite. 

It is \textbf{hyperfinite} if it can be written as an increasing union of measure-preserving equivalence relations $\mathcal R_n$ whose equivalence classes are finite. 

A graphing $\Phi$ is hyperfinite if the equivalence relation it generates is hyperfinite.
\end{df}

Let $\Phi$ be a hyperfinite graphing, our aim is to show that its derived $\LL^1$ full group contains a dense subgroup all whose elements are products of two involutions. Our proof uses the exact same ideas as the proof that $\LL^1$ full groups of hyperfinite graphings are amenable \cite{L1fullgpsIIinprep}; we provide full details for the reader's convenience.

\begin{lem}\label{lem: dense increasing union}Let $\Phi$ be an aperiodic graphing and suppose that $\mathcal R_\Phi$ is written as an increasing union of equivalence relations $\mathcal R_n$. Then
$\bigcup_{n\in\N}[\mathcal R_n]\cap [\Phi]_1'$
is dense in $[\Phi]_1'$. 
\end{lem}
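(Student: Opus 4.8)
The plan is to approximate an arbitrary element $T \in [\Phi]_1'$ in the metric $\tilde d_\Phi$ by an element of $[\mathcal R_n] \cap [\Phi]_1'$ for suitably large $n$. First I would recall that, since $T$ lies in the $\LL^1$ full group, the function $x \mapsto d_\Phi(x, T(x))$ is integrable; hence for any $\eps > 0$ there is a bound $N$ such that the set $A_N = \{x : d_\Phi(x, T(x)) > N\}$ has $\int_{A_N} d_\Phi(x,T(x))\, d\mu < \eps/2$. Since $\bigcup_n \mathcal R_n = \mathcal R_\Phi$ and each $\mathcal R_n$ increases, on the complement of $A_N$ the pair $(x, T(x))$ is joined by an $\mathcal R_\Phi$-path of length at most $N$; I would like to conclude that for $n$ large enough, $(x, T(x)) \in \mathcal R_n$ for all $x$ outside a set of small measure. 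This requires a measure-theoretic argument: the sets $B_n = \{x \notin A_N : (x, T(x)) \in \mathcal R_n\}$ increase to $\{x \notin A_N : (x,T(x)) \in \mathcal R_\Phi\}$, which is co-null in $X \setminus A_N$ because $T \in [\mathcal R_\Phi]$, so by continuity of measure $\mu(X \setminus (A_N \cup B_n)) \to 0$.

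Next I would build the approximant. On the set $B_n$ (outside $A_N$) the transformation $T$ already respects $\mathcal R_n$, but a priori $T$ restricted to $B_n$ need not be a bijection of $B_n$ onto itself — its range could leak out. The cleanest fix is to consider the set $C_n$ on which both $(x,T(x)) \in \mathcal R_n$ and $(x, T\inv(x)) \in \mathcal R_n$, equivalently $C_n = B_n \cap T\inv(B_n)$, which is $T$-invariant and still has measure tending to $1$. Define $T_n$ to agree with $T$ on $C_n$ and to be the identity on $X \setminus C_n$. Then $T_n \in [\mathcal R_n]$ by construction, and $\tilde d_\Phi(T, T_n) = \int_{X \setminus C_n} d_\Phi(x, T(x))\, d\mu$, which is at most $\eps/2$ coming from the part of $X\setminus C_n$ inside $A_N$ plus at most $N \cdot \mu((X \setminus C_n) \setminus A_N)$ from the rest — both small for $n$ large. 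Hence $T_n \to T$ in $[\Phi]_1$.

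The remaining, and genuinely delicate, point is that we need $T_n \in [\Phi]_1'$, not merely $T_n \in [\mathcal R_n]$; that is, the approximants must lie in the \emph{derived} $\LL^1$ full group. Here I would use aperiodicity of $\Phi$ together with the structure of $[\mathcal R_n]$: since $\mathcal R_n$ is finite (finite classes) while $\mathcal R_\Phi$ is aperiodic, a $T$-invariant set of positive measure on which $T_n$ acts can be cut into two pieces of equal measure inside the $\Phi$-equivalence relation, which lets one conjugate $T_n$ to a disjointly-supported copy by an involution in $[\Phi]_1$, exhibiting $T_n$ as a commutator — or more directly, one appeals to \cite[Lem. 3.10]{l1fullgpsI}, which says every involution of $[\Phi]_1$ lies in $[\Phi]_1'$, after first arranging that $T_n$ itself (or a small perturbation of it) is a product of involutions supported on $\mathcal R_n$-classes. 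I expect this last step — ensuring membership in the derived group rather than just the full group of $\mathcal R_n$ — to be the main obstacle, and I would handle it by a periodic-cutting argument exploiting that $\mathcal R_n$ is finite inside the aperiodic $\mathcal R_\Phi$, so there is always ``room'' to realize elements of $[\mathcal R_n]$ as commutators in $[\Phi]_1$.
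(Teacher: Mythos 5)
There is a genuine gap at the heart of your construction: the set on which you restrict $T$ is not $T$-invariant. Writing $B_n=\{x: (x,T(x))\in\mathcal R_n\}$, your set $C_n$ (whether you read it as $\{x: (x,T(x))\in\mathcal R_n \text{ and } (x,T\inv(x))\in\mathcal R_n\}$ or as $B_n\cap T\inv(B_n)$) fails to satisfy $T(C_n)=C_n$: membership of $x$ gives information about $(x,T(x))$ and one neighbouring pair, but says nothing about $(T(x),T^2(x))$ (respectively $(T^2(x),T^3(x))$), so $T(x)$ need not lie in $C_n$. Consequently ``$T$ on $C_n$, identity elsewhere'' is not a well-defined measure-preserving transformation (injectivity fails at points of $X\setminus C_n$ hit by $T(C_n)$). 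The obvious repair, intersecting over all powers $\bigcap_{k\in\Z}T^{-k}(B_n)$, destroys the measure estimate: when the $\mathcal R_n$ are finite and $T$ is aperiodic, the set of points whose entire $T$-orbit stays in a single $\mathcal R_n$-class is null. One can instead extend the partial isomorphism $T|_{B_n}\in[[\mathcal R_n]]$ to an element of $[\mathcal R_n]$ class by class, but then the displacement $d_\Phi(x,T_n(x))$ on $X\setminus B_n$ is no longer controlled, so the approximation in $\tilde d_\Phi$ does not follow; this would need an additional argument. Your final step (membership of the approximant in $[\Phi]_1'$) is also only sketched, and as you suspect it is not automatic for a general element of $[\mathcal R_n]$.

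The paper avoids both difficulties by not approximating a general $T$ at all: by Theorem \ref{thm: topo gen by I_phi}, $[\Phi]_1'$ is topologically generated by its involutions, so it suffices to approximate an involution $U\in[\Phi]_1'$. For an involution the set $A_n=\{x: x\,\mathcal R_n\,U(x)\}$ is automatically $U$-invariant (since $(x,U(x))\in\mathcal R_n$ gives $(U(x),U^2(x))=(U(x),x)\in\mathcal R_n$), so ``$U$ on $A_n$, identity elsewhere'' is a genuine involution of $[\Phi]_1$ lying in $[\mathcal R_n]$, it belongs to $[\Phi]_1'$ by \cite[Lem. 3.10]{l1fullgpsI} because $\Phi$ is aperiodic, and it converges to $U$ by dominated convergence since $\bigcup_n A_n=X$. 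If you want to salvage your direct approach you must either restrict to involutions as the paper does, or supply both a correct construction of an $\mathcal R_n$-measurable approximant with controlled displacement and a proof that it lies in the derived group; the reduction to involutions is precisely what makes both points trivial.
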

\begin{proof}
By  we only need to be able to approximate involutions from  $[\Phi]_1'$ by elements from $\bigcup_{n\in\N}[\mathcal R_n]\cap [\Phi]_1'$. But this is immediate from the dominated convergence theorem and the fact that the $U$-invariant sets $A_n=\{x\in X: x\mathcal R_n U(x)\}$ satisfy $\bigcup_{n\in\N} A_n= X$ since $\bigcup_n \mathcal R_n=\mathcal R_\Phi$. 
\end{proof}

\begin{df}
Given a graphing $\Phi$, we say that the $\mathcal R_\Phi$-classes have a \textbf{uniformly bounded $\Phi$-diameter} if there is $N\in\N$ such that for any $(x,y)\in\mathcal R_\Phi$ we have $d_\Phi(x,y)\leq N$.
\end{df}

\begin{lem}\label{lem: bounded classes}Let $\Phi$ be an aperiodic graphing, let $\mathcal R$ be a finite subequivalence relation of $\mathcal R_\Phi$ whose equivalence classes have a uniformly bounded $\Phi$-diameter. Then $[\mathcal R]$ is a subgroup of $[\Phi]_1'$.
\end{lem}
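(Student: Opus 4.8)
The goal is to show that $[\mathcal R]$ embeds into $[\Phi]_1'$, i.e. every $T \in [\mathcal R]$ lies in the closure of the derived group of $[\Phi]_1$. The plan breaks into two parts: first, $[\mathcal R] \subseteq [\Phi]_1$; second, $[\mathcal R] \subseteq [\Phi]_1'$.

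For the first part, take $T \in [\mathcal R]$. Since $\mathcal R \subseteq \mathcal R_\Phi$ we have $T \in [\mathcal R_\Phi]$, and the uniform bound $d_\Phi(x,y) \leq N$ for $(x,y) \in \mathcal R$ immediately gives $\int_X d_\Phi(x, T(x))\, d\mu(x) \leq N < +\infty$, so $T \in [\Phi]_1$. This step is routine.

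The second part is the heart of the matter. I would argue that $[\mathcal R]$ is topologically generated by involutions of $[\Phi]_1$ and then invoke the fact (recalled just before Thm.~\ref{thm: topo gen by I_phi}, namely \cite[Lem. 3.10]{l1fullgpsI}) that all involutions of $[\Phi]_1$ lie in $[\Phi]_1'$ because $\Phi$ is aperiodic; since $[\Phi]_1'$ is closed, this yields $[\mathcal R] \subseteq [\Phi]_1'$. To see that $[\mathcal R]$ is topologically generated by its involutions: $[\mathcal R]$ is the full group of a finite measure-preserving equivalence relation, and such full groups are known to be topologically generated by involutions — concretely, one can decompose $\mathcal R$ into a uniformly bounded union of cycles (since classes are finite), write any $T \in [\mathcal R]$ as a product of "marker-controlled" transpositions supported on the $\mathcal R$-classes, and approximate. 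Alternatively, and perhaps more cleanly, one cuts $\mathcal R$ into finitely many pieces $\mathcal R = \mathcal R_1 \vee \cdots \vee \mathcal R_k$ where each $\mathcal R_i$ has classes of size at most $2$ (possible because classes are finite with uniformly bounded $\Phi$-diameter, hence one can first reduce to classes of uniformly bounded cardinality), so that each $[\mathcal R_i]$ consists entirely of involutions and $[\mathcal R] \subseteq \langle [\mathcal R_1], \dots, [\mathcal R_k]\rangle$; combined with $[\mathcal R_i] \subseteq [\Phi]_1'$ and the fact that $[\Phi]_1'$ is a group, this gives $[\mathcal R] \subseteq [\Phi]_1'$.

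The main obstacle I anticipate is the reduction "finite classes with uniformly bounded $\Phi$-diameter implies uniformly bounded cardinality": a class of $\Phi$-diameter $\leq N$ need not be finite in general, but here we are assuming $\mathcal R$ is finite, so each class is finite; yet a priori the cardinalities could be unbounded. However, since the $\Phi$-diameter is bounded by $N$ and $\Phi$ is countable, a measurable exhaustion argument (partition $X$ into countably many pieces according to the combinatorial isomorphism type of the $\Phi$-labelled graph on the $\mathcal R$-class, all of which have diameter $\leq N$) lets one work on each piece separately and then glue; on each piece the cardinality is constant, so the involution decomposition is uniform there. Handling the measurability of this decomposition and checking that the resulting involutions genuinely lie in $[\Phi]_1$ (they do, being subsets of $\mathcal R$ with diameter $\leq N$) is the technical point to be careful about; the rest is bookkeeping.
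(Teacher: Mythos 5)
Your proposal is correct and follows the same two-step skeleton as the paper: the uniform bound on the $\Phi$-diameter of the $\mathcal R$-classes gives $[\mathcal R]\subseteq[\Phi]_1$, and one then has to see that $[\mathcal R]$ sits inside $[\Phi]_1'$. For the second step the paper is more economical: since $\mathcal R$ is finite, every element of $[\mathcal R]$ is periodic, and periodic elements of $[\Phi]_1$ belong to $[\Phi]_1'$ when $\Phi$ is aperiodic; this is exactly the fact you are re-deriving through involutions, which is legitimate since the paper recalls that involutions of $[\Phi]_1$ lie in $[\Phi]_1'$ (\cite[Lem.~3.10]{l1fullgpsI}) and $[\Phi]_1'$ is closed. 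Where you should be careful is the variant you call ``cleaner'': cutting $\mathcal R$ into finitely many subrelations with classes of size at most $2$ would require a uniform bound on the cardinality of the classes, and a uniform bound on the $\Phi$-diameter does not give that (balls in the $\Phi$-graph may be infinite, so finiteness of $\mathcal R$ only gives finite, possibly unbounded, classes); moreover your patch by partitioning into countably many isomorphism types only glues into a finite product if the number of involution factors per piece is uniformly bounded. The simplest fix, which makes this bookkeeping unnecessary, is the one the paper itself invokes later (proof of Thm.~\ref{thm: dense products of 2 invol}): any periodic element of a full group is a product of two involutions of that full group \cite[Sublemma 4.3]{MR2583950}, independently of class size, and these two involutions lie in $[\mathcal R]\subseteq[\Phi]_1$, hence in $[\Phi]_1'$.
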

\begin{proof}
The fact that there is a uniform bound $M$ on the diameter of the $\mathcal R$-classes ensures us that $[\mathcal R]$ is a subgroup of $[\Phi]_1$. Moreover all the elements of $[\mathcal R]$ are periodic and hence belong to $[\Phi]_1'$.
\end{proof}

\begin{lem}\label{lem: uniform bounded on cardinality implies loc finite}
Let $\mathcal R$ be an equivalence relation. Suppose that there is $n\in\N$ such that all the $\mathcal R$-classes have cardinality at most $n$. Then $[\mathcal R]$ is locally finite. 
\end{lem}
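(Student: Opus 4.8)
The plan is to show that $[\mathcal R]$ is locally finite by proving that every finitely generated subgroup of $[\mathcal R]$ is finite. So fix $T_1,\dots,T_k \in [\mathcal R]$ and let $H = \langle T_1,\dots,T_k\rangle$; we want to bound $|H|$ in terms of $n$ and $k$. The key observation is that since each $T_i$ preserves every $\mathcal R$-class (each being a set of size at most $n$), the whole group $H$ acts on $X$ by permuting each $\mathcal R$-class among itself. Thus for $\mu$-almost every $x$, the orbit $H\cdot x$ is contained in $[x]_{\mathcal R}$ and hence has size at most $n$.

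First I would make the combinatorial reduction precise. An element $T \in [\mathcal R]$ is determined, up to a null set, by the data of which permutation of $[x]_{\mathcal R}$ it induces on each class. Partition $X$ (modulo null sets) according to the "isomorphism type" of the pointed finite structure $([x]_{\mathcal R}, x)$ together with the induced permutations $T_1|_{[x]_\mathcal R},\dots,T_k|_{[x]_\mathcal R}$; since all classes have size at most $n$ and there are $k$ generators, there are only finitely many such types, say indexed by a finite set $I$. On the piece $X_\iota$ corresponding to type $\iota \in I$, any two elements $T, T'$ of $H$ that induce the same permutation on one (hence every) class of $X_\iota$ agree on $X_\iota$. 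Therefore the restriction homomorphism $H \to \prod_{\iota \in I}\operatorname{Sym}(\text{class of type }\iota)$ is injective, and since $\operatorname{Sym}$ of an $n$-element set has order $n!$, we get $|H| \leq (n!)^{|I|} < \infty$.

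The slightly delicate point — and the one I would write out most carefully — is the measurability bookkeeping: one must check that the decomposition of $X$ into the finitely many types $X_\iota$ is a genuine Borel (measurable) partition, and that "inducing the same permutation on a class of $X_\iota$" is consistent across the class and measurably defined, so that $T$ and $T'$ really do agree almost everywhere on $X_\iota$ rather than merely on each class separately. This uses a Borel selection of a point (or an enumeration) in each $\mathcal R$-class together with the fact that $\mathcal R$ is a Borel equivalence relation with uniformly bounded finite classes, which makes it smooth; one can then read off the permutation induced by each $T_i$ as a Borel function of $x$. Once this is in place, the injectivity of $H \to \prod_\iota \operatorname{Sym}$ is immediate and the bound $|H| \le (n!)^{|I|}$ follows. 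Hence every finitely generated subgroup of $[\mathcal R]$ is finite, i.e. $[\mathcal R]$ is locally finite.

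I expect the main obstacle to be purely expository rather than mathematical: formulating the "type partition" cleanly enough that the measure-theoretic identification of elements of $H$ with tuples of permutations is transparent. An alternative, perhaps cleaner, route avoiding an explicit type partition: embed $[\mathcal R]$ into the group of all measurable assignments $x \mapsto \sigma_x \in \operatorname{Sym}([x]_\mathcal R)$, note that $[\mathcal R]$ is then a subgroup of an $\ell^\infty$-type product of copies of $\operatorname{Sym}$ of sets of size $\le n$, and invoke that such a product is locally finite because each of its finitely generated subgroups, acting with all orbits of size $\le n$, lands in a product of finitely many copies of $\operatorname{Sym}(\{1,\dots,n\})$ after grouping indices by the isomorphism type of the finite action — which is exactly the same argument phrased group-theoretically. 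Either way the heart is the uniform bound on class size forcing finitely many local behaviours.
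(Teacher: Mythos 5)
Your proof is correct and follows essentially the same route as the paper: the paper also fixes finitely many generators, uses a Borel linear order to identify each class $\mathcal R$-invariantly with $\{1,\dots,m_x\}$, partitions $X$ into finitely many $\mathcal R$-invariant pieces according to how the marked group of generators acts, and embeds the generated subgroup into a finite product of finite permutation groups. Your attention to the measurability of the type partition is exactly the role played by the Borel linear order in the paper's argument.
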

\begin{proof}
Let $U_1,...,U_k\in[\mathcal R]$ for some arbitrary $k\in\mathbb{N}$. By using a Borel linear order on $X$, we can identify in an $\mathcal R$-invariant manner the $\mathcal R$ orbit of each $x\in X$ with a set of the form $\{1,...,m_x\}$ with $m_x\leq n$. For each $x\in X$ there are only finitely many ways that the marked group generated by $\la U_1,...,U_k\ra$ can act on the finite set $\{1,...,m_x\}$. So we get a finite partition of $X$ into $\mathcal R$-invariant sets such that on each atom of this partition, the marked group $\la U_1,...,U_k\ra$ acts on each orbit the same way, and we thus embed $\la U_1,...,U_k\ra$ in a finite product of finite permutation groups. 
\end{proof}

\begin{thm}\label{thm: dense products of 2 invol}
Let $\Phi$ be a hyperfinite graphing. Then $[\Phi]_1'$ contains a dense locally finite subgroup, all whose elements are products of two involutions.
\end{thm}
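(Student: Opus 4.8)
The plan is to produce the desired dense subgroup as an increasing union $H=\bigcup_{n\in\N}[\mathcal R_n]$, where $(\mathcal R_n)_{n\in\N}$ is an increasing sequence of finite subequivalence relations of $\mathcal R_\Phi$ satisfying, for every $n$: (i) the $\mathcal R_n$-classes have uniformly bounded $\Phi$-diameter; (ii) the $\mathcal R_n$-classes have uniformly bounded cardinality; and (iii) $\bigcup_n\mathcal R_n=\mathcal R_\Phi$. (As in the lemmas of this section, we assume $\Phi$ aperiodic.) Granting such a sequence, everything drops out of the three preceding lemmas: by Lemma~\ref{lem: bounded classes} and~(i), each $[\mathcal R_n]$ is a subgroup of $[\Phi]_1'$, hence so is the increasing union $H$; by Lemma~\ref{lem: uniform bounded on cardinality implies loc finite} and~(ii), each $[\mathcal R_n]$ is locally finite, hence so is $H$, since a finitely generated subgroup of $H$ lies in some $[\mathcal R_n]$; and by Lemma~\ref{lem: dense increasing union} and~(iii), the set $\bigcup_n[\mathcal R_n]\cap[\Phi]_1'$ is dense in $[\Phi]_1'$, and it equals $H$ because $[\mathcal R_n]\subseteq[\Phi]_1'$. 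So the whole problem reduces to constructing the sequence $(\mathcal R_n)$.

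To build it, I would start from the hyperfiniteness of $\mathcal R_\Phi$, writing $\mathcal R_\Phi=\bigcup_m\mathcal T_m$ as an increasing union of finite equivalence relations, and fix an enumeration $\Phi=\{\varphi_1,\varphi_2,\dots\}$. Set $\mathcal R_0=\Delta_X$ and construct $\mathcal R_n$ inductively, maintaining, besides (i)--(ii) with bounds $N_n,c_n$ and $\mathcal R_n\subseteq\mathcal R_{n+1}$, the two auxiliary properties that $\mathcal R_n\subseteq\mathcal T_{m_n}$ for some $m_n$ and that $\mu(B_n)<2^{-n}$, where $B_n$ is the set of $x$ with $(x,\varphi_j(x))\notin\mathcal R_n$ for some $j\leq n$ such that $\varphi_j(x)$ is defined. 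In the inductive step, first pick $m_{n+1}\geq m_n$ large enough that $\mathcal T_{m_{n+1}}$ contains the graphs of $\varphi_1,\dots,\varphi_{n+1}$ up to a set of measure $<2^{-(n+2)}$ (possible since $\varphi_j(x)\,\mathcal R_\Phi\,x$ a.e.\ and $\mathcal T_m\nearrow\mathcal R_\Phi$); then, since $\mathcal T_{m_{n+1}}$ is finite, pick $N_{n+1}\geq N_n$ and $c_{n+1}\geq c_n$ large enough that the $\mathcal T_{m_{n+1}}$-invariant set $G$ of points whose $\mathcal T_{m_{n+1}}$-class has $\Phi$-diameter $\leq N_{n+1}$ and cardinality $\leq c_{n+1}$ satisfies $\mu(X\setminus G)<2^{-(n+2)}$; finally define $\mathcal R_{n+1}$ to agree with $\mathcal T_{m_{n+1}}$ on $G$ and with $\mathcal R_n$ on $X\setminus G$. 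Because $\mathcal R_n\subseteq\mathcal T_{m_n}\subseteq\mathcal T_{m_{n+1}}$, the set $G$ is invariant under both relations, so $\mathcal R_{n+1}$ is a well-defined finite equivalence relation whose classes are either $\mathcal T_{m_{n+1}}$-classes inside $G$ or $\mathcal R_n$-classes outside $G$; thus it satisfies (i)--(ii) with bounds $N_{n+1},c_{n+1}$ and $\mathcal R_n\subseteq\mathcal R_{n+1}\subseteq\mathcal T_{m_{n+1}}$, and $\mu(B_{n+1})<2^{-(n+1)}$ (any $x\in G$ good for $\mathcal T_{m_{n+1}}$ is good for $\mathcal R_{n+1}$, since then $\varphi_j(x)$ lies in the same $\mathcal T_{m_{n+1}}$-class, which sits in $G$). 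Borel--Cantelli then shows that for a.e.\ $x$ and every $j$ the pair $(x,\varphi_j(x))$ eventually lies in $\mathcal R_n$, which gives~(iii).

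It then remains to see that every element of $H$ is a product of two involutions. Fix $T\in[\mathcal R_n]$ and a Borel linear order on $X$; the latter identifies each $\mathcal R_n$-class $C$ with $\{1,\dots,|C|\}$, and $T$ restricts on $C$ to a permutation $\pi_C$. I would invoke the classical fact that every element of a finite symmetric group is a product of two involutions---one conjugates each cycle to its inverse by the order-reversing involution---and note that this factorisation is canonical once one fixes the cycle decomposition and the least element of each cycle; hence one obtains involutions $\sigma_C,\tau_C$ of $C$ with $\pi_C=\sigma_C\tau_C$ depending measurably on $C$. Gluing over all $\mathcal R_n$-classes produces involutions $\sigma,\tau\in[\mathcal R_n]$, which lie in $[\Phi]_1'$ by Lemma~\ref{lem: bounded classes}, with $T=\sigma\tau$.

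The step I expect to be the main obstacle is the inductive construction of $(\mathcal R_n)$: nestedness, the uniform bounds (i)--(ii), and the exhaustion (iii) genuinely conflict---truncating a finite relation to classes of bounded size destroys monotonicity, and joins of finite relations can have infinite classes---and the two devices used above, namely keeping each $\mathcal R_n$ inside a single $\mathcal T_m$ and ``falling back'' to $\mathcal R_n$ on the part of $X$ where $\mathcal T_{m_{n+1}}$ has large classes, are exactly what reconciles them. This is the same mechanism as in the proof of amenability of $\LL^1$ full groups of hyperfinite graphings in \cite{L1fullgpsIIinprep}.
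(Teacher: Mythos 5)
Your proposal is correct, and its architecture is the same as the paper's: reduce everything to an increasing sequence of finite subequivalence relations of $\mathcal R_\Phi$ with uniformly bounded $\Phi$-diameter and cardinality whose union is $\mathcal R_\Phi$, then conclude via Lemmas \ref{lem: dense increasing union}, \ref{lem: bounded classes} and \ref{lem: uniform bounded on cardinality implies loc finite} together with the fact that a finite permutation is a product of two involutions. The only real divergence is in how that sequence is produced. The paper takes an arbitrary increasing sequence $\mathcal R_n$ of finite relations, chooses $\varphi(n)$ so that the set of points whose $\mathcal R_n$-class has cardinality or $\Phi$-diameter exceeding $\varphi(n)$ has measure $<2^{-n}$, and defines $\mathcal S_n$ by simply restricting $\mathcal R_n$ to its good classes; Borel--Cantelli then gives $\bigcup_n\mathcal S_n=\mathcal R_\Phi$. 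As you correctly anticipate, such a truncation does not obviously preserve nestedness (a point whose $\mathcal R_n$-class is good may have a bad $\mathcal R_{n+1}$-class), and your inductive scheme --- keeping each relation inside a single $\mathcal T_{m_n}$ and falling back to $\mathcal R_n$ off the good set --- is precisely what guarantees a genuinely increasing sequence, so that Lemma \ref{lem: dense increasing union} (stated for increasing unions) and the local finiteness of the directed union apply without further comment. This costs you a longer construction but buys a cleaner invocation of the lemmas. Your final paragraph replaces the paper's citation of \cite[Sublemma 4.3]{MR2583950} by a direct measurable version of the two-involutions factorisation of a finite permutation; the canonical choice via a Borel linear order makes this unobjectionable. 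One minor remark: as you note, the argument (like the paper's) uses the aperiodicity of $\Phi$ through Lemmas \ref{lem: dense increasing union} and \ref{lem: bounded classes}, an assumption not explicit in the statement of the theorem.
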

\begin{proof}
By definition we may write $\mathcal R_\Phi$ as an increasing union of finite equivalence relations $\mathcal R_n$. It is well known that we can moreover assume that each $\mathcal R_n$ has equivalence classes of size at most $n$. Let us show that we can moreover assume each $\mathcal R_n$ has equivalence classes of uniformly bounded $\Phi$-diameter and cardinality.

We first find an increasing sequence of integers $(\varphi(n))_{n\in\N}$ such that for all $n\in\N$, $$\mu(\{x\in X: \abs{[x]_{\mathcal R_n}}>\varphi(n)\text{ or }\diam_\Phi([x]_{\mathcal R_n})>\varphi(n)\})<\frac 1{2^n}.$$
Then by the Borel-Cantelli lemma, for almost all $x\in X$ there are only finitely many $n\in\N$ such that $\abs{[x]_{\mathcal R_n}}>\varphi(n)$ or $\diam_\Phi([x]_{\mathcal R_n})>\varphi(n)$. So if we define  new equivalence relations $\mathcal S_n$ by $(x,y)\in\mathcal S_n$ if $(x,y)\in\mathcal R_n$ and  $\diam_\Phi([x]_{\mathcal R_n})\leq\varphi(n)$ and $\abs{[x]_{\mathcal R_n}}\leq\varphi(n)$, we still have $\bigcup_{n\in\N} \mathcal S_n=\mathcal R$ and the $\mathcal S_n$-classes have a uniformly bounded diameter and cardinality as wanted.

By Lemma \ref{lem: dense increasing union}, $\bigcup_{n\in\N}[\mathcal S_n]\cap [\Phi]_1'$ is dense in $[\Phi]_1'$. But Lemma \ref{lem: bounded classes} yields that $[\mathcal S_n]\cap [\Phi]_1'=[\mathcal S_n]$. Since $\mathcal S_n$ is finite and all its classes have cardinality at most $\varphi(n)$, $[\mathcal S_n]$ is locally finite by Lem. \ref{lem: uniform bounded on cardinality implies loc finite}. Finally each element of $[\mathcal S_n]$ is a product of two involutions by \cite[Sublemma 4.3]{MR2583950}.
So the subgroup $\bigcup_{n\in\N}[\mathcal S_n]$ is as wanted. 
\end{proof}

\section{Topological bounded normal generation for derived \texorpdfstring{$\LL^1$}{L1}  full groups}\label{sec_TBNG}

In this section, we will prove that derived $\LL^1$ full groups of ergodic amenable graphings have TBNG. A key tool will be Thm. \ref{thm: dense products of 2 invol}, but we also need a better understanding of products of conjugates of involutions. 

\begin{lem}[{\cite[Lem. 3.21]{l1fullgpsI}}]
Let $\Phi$ be an ergodic graphing, and let $U\in[\Phi]_1$ be an involution whose support has measure $\alpha\leq 1/2$. Then the closure of the $[\Phi]_1'$ conjugacy class of $U$ contains  all involutions in $[\Phi]_1$ whose support has measure $\alpha$ and is disjoint from the support of $U$. 
\end{lem}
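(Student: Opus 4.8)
**Proof plan for the final Lemma (closure of conjugacy class of an involution contains disjoint involutions of the same support measure).**

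The plan is to fix an involution $U$ with support $A$ of measure $\alpha \le 1/2$, and an involution $V$ whose support $B$ has measure $\alpha$ with $A \cap B = \emptyset$, and to produce a sequence $g_n \in [\Phi]_1'$ with $g_n U g_n^{-1} \to V$. The natural first move is to find a partial isomorphism $\psi$ of the ambient equivalence relation conjugating (the nontrivial part of) $U$ to $V$: since $U$ and $V$ are both involutions, each decomposes its support into two halves swapped by the map, so $A = A_0 \sqcup A_1$ with $U|_{A_0}:A_0 \to A_1$, and similarly $B = B_0 \sqcup B_1$; because the relation $\mathcal R_\Phi$ is ergodic (hence has no invariant sets of intermediate measure and is ``full'' enough to move mass around), one can choose a partial isomorphism $\psi$ of $\mathcal R_\Phi$ with $\dom \psi = A$, $\rng \psi = B$, sending $A_0$ to $B_0$ and $A_1$ to $B_1$, and such that $\psi U \psi^{-1} = V$ on $B$.

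The difficulty is that $\psi$ is only a partial isomorphism, not an element of $[\Phi]_1'$, so it must be extended to a genuine group element and, crucially, one of \emph{finite} $\mathrm L^1$-cost lying in the \emph{derived} $\mathrm L^1$ full group. Here I would use ergodicity again, together with the fact (recalled in the excerpt, \cite[Lem. 3.10]{l1fullgpsI}) that involutions lie in $[\Phi]_1'$: one extends $\psi$ to an involution $W_0$ that swaps $A$ with $B$ pointwise according to $\psi$ and is the identity elsewhere; this $W_0$ is an involution of $[\Phi]_1$ with support of finite measure, and a priori finite $\mathrm L^1$-cost provided one chooses $\psi$ along edges — but in general $\psi$ need not have bounded $\Phi$-distance. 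To handle the $\mathrm L^1$-integrability, the key point is that one does not need a single conjugator: since we only want the \emph{closure} of the conjugacy class, I would approximate $V$ by involutions $V_n$ whose support $B_n \subseteq B$ is an increasing union exhausting $B$, with $V_n$ obtained by conjugating $U|_{(\text{part of }A)}$ along a partial isomorphism supported on the set where the $\Phi$-distance from $A$ to $B$ is at most $n$. On that set the conjugator $W_n$ has $\mathrm L^1$-cost at most (something like) $2n\mu(\mathrm{supp}\,W_n) < \infty$, so $W_n \in [\Phi]_1$, and being an involution $W_n \in [\Phi]_1'$; then $W_n U W_n^{-1}$ agrees with $V_n$ on $B_n$ and with $U$-conjugated-elsewhere on a set of measure going to $0$, whence $\tilde d_\Phi(W_n U W_n^{-1}, V) \to 0$.

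The main obstacle I anticipate is precisely the interplay between the $\mathrm L^1$ constraint and the requirement that the conjugator be supported on $A \cup B$ (to keep $U$'s image an involution with the \emph{prescribed} support $B$ disjoint from $A$): one must arrange that outside the ``bounded-distance'' region the conjugator is the identity, so that the parts of $U$ not yet moved stay inside $A$ and do not pollute $B$, while the moved parts land exactly in $B_n$. Keeping track of these supports carefully — and checking that the residual discrepancy is an involution on a small set, so it contributes a small amount to $\tilde d_\Phi$ — is the technical heart of the argument; everything else (measurable selection of $\psi$ using ergodicity, $W_n \in [\Phi]_1'$ because it is an involution, dominated convergence for the $\mathrm L^1$ estimate) is routine given the results already quoted in the excerpt.
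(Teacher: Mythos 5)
This lemma is not proved in the paper at all: it is imported verbatim from \cite[Lem. 3.21]{l1fullgpsI}, so there is no in-text argument to compare yours against. On its own merits, your sketch is a correct and essentially complete proof. The key points all check out: by ergodicity any two sets of equal measure are equidecomposable inside $\mathcal R_\Phi$, so choosing fundamental domains $A_0, B_0$ for $U$ and $V$ you get a partial isomorphism $\psi\colon A\to B$ with $\psi U\psi\inv=V|_B$ (extend from $A_0$ by $\psi|_{A_1}=V\psi U$); the truncation $C_n=\{x\in A: d_\Phi(x,\psi(x))\leq n\}$, made $U$-invariant by replacing it with $C_n\cap U(C_n)$, yields an involution $W_n$ swapping $C_n$ and $\psi(C_n)$ with $\LL^1$-cost at most $2n\alpha$, hence $W_n\in[\Phi]_1$ and then $W_n\in[\Phi]_1'$ by the cited \cite[Lem. 3.10]{l1fullgpsI} (an ergodic relation on a non-atomic space is aperiodic); and $W_nUW_n\inv$ equals $V$ on $\psi(C_n)$, equals $U$ on $A\setminus C_n$, and is trivial elsewhere, so $\tilde d_\Phi(W_nUW_n\inv,V)\leq \int_{A\setminus C_n}d_\Phi(x,U(x))\,\d\mu+\int_{B\setminus\psi(C_n)}d_\Phi(x,V(x))\,\d\mu\to 0$ by dominated convergence, since $U,V\in[\Phi]_1$ and $d_\Phi(x,\psi(x))<\infty$ a.e. The only hypotheses you do not use are harmless: disjointness of the supports together with $\mu(A)=\mu(B)=\alpha$ already forces $\alpha\leq 1/2$, and disjointness is what makes $W_nUW_n\inv$ act as you describe. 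So the two technical worries you flag (the $\LL^1$ bound on the truncated conjugator, and keeping the truncation $U$-invariant so the residue is just $U$ restricted to a small invariant set) are exactly the right ones, and both resolve as you indicate.
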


\begin{lem}\label{lem:closure conjugacy class contains involutions}
Let $\Phi$ be an ergodic graphing, let $U\in [\Phi]_1$ be an involution whose support has measure $\alpha<1/3$. Then the closure of the $[\Phi]_1'$ conjugacy class of $U$ contains  all involutions in $[\Phi]_1$ whose support has measure $\alpha$.
\end{lem}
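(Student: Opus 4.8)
The goal is to upgrade the previous lemma (which handles involutions whose support is disjoint from $\supp U$) to cover \emph{all} involutions of the same support measure $\alpha < 1/3$. The natural strategy is a two-step conjugation argument passing through an auxiliary involution. Given a target involution $V$ with $\mu(\supp V) = \alpha$, I would like to find a third involution $W \in [\Phi]_1$ with $\mu(\supp W) = \alpha$ such that $\supp W$ is disjoint from \emph{both} $\supp U$ and $\supp V$. Then by the previous lemma applied to $U$, the closure of the $[\Phi]_1'$-conjugacy class of $U$ contains $W$; and by the previous lemma applied to $V$ (whose support also has measure $\alpha < 1/3 \le 1/2$), the closure of the $[\Phi]_1'$-conjugacy class of $V$ contains $W$, hence by symmetry of ``being in the closure of a conjugacy class'' — more precisely, since conjugacy classes of involutions of a fixed support measure that are $[\Phi]_1'$-conjugate have the same closure, and $W$ being a limit of conjugates of $V$ forces $V$ to be a limit of conjugates of $W$ — we get that $V$ lies in the closure of the conjugacy class of $W$, which lies in the closure of the conjugacy class of $U$. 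Composing, $V$ is in the closure of the $[\Phi]_1'$-conjugacy class of $U$.

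The one point needing care is the existence of such a $W$. Since $\mu(\supp U \cup \supp V) \le 2\alpha < 2/3$, the complement $X \setminus (\supp U \cup \supp V)$ has measure $> 1/3 > \alpha$, so there is room to place a set of measure $\alpha$ disjoint from both supports; using ergodicity (or just aperiodicity) of $\Phi$ one builds inside that complement an involution $W \in [\Phi]_1$ with $\mu(\supp W) = \alpha$, for instance by taking a Borel set $A$ of measure $\alpha/2$ inside the complement together with a partial isomorphism in $[\Phi]_1$ moving $A$ to a disjoint set of the same measure inside the complement — ergodicity guarantees such a partial isomorphism of finite $d_\Phi$-cost exists. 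I should double-check that this $W$ genuinely lies in $[\Phi]_1$ (finite integral of $d_\Phi(x, W(x))$), which follows from choosing the transporting partial isomorphism to be a finite composition of the $\varphi_k^{\pm 1}$ on a set of finite measure, exactly as in the constructions in \cite{l1fullgpsI}.

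The symmetry step — deducing ``$V$ is in the closure of the conjugacy class of $W$'' from ``$W$ is in the closure of the conjugacy class of $V$'' — is the only mildly delicate logical move, since closures of conjugacy classes are not a priori symmetric relations. It works here because both $V$ and $W$ are involutions with support measure exactly $\alpha$: the previous lemma gives us that \emph{every} involution with support measure $\alpha$ disjoint from $\supp V$ is in $\overline{V^{[\Phi]_1'}}$, and symmetrically every involution with support measure $\alpha$ disjoint from $\supp W$ is in $\overline{W^{[\Phi]_1'}}$; since we can pick yet another involution $W'$ of support measure $\alpha$ disjoint from all of $\supp U, \supp V, \supp W$ (again $2\alpha < 2/3$ leaves enough room, in fact we need room for three sets of measure $\alpha$, which requires $3\alpha < 1$ — precisely where the hypothesis $\alpha < 1/3$ is used sharply), we route through $W'$ instead and never need to invert a closure relation. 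So the main obstacle is really just bookkeeping the measure budget so that all the disjoint sets of measure $\alpha$ we need can be fit simultaneously, which is exactly what $\alpha < 1/3$ buys us.
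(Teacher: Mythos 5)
Your overall route---insert an auxiliary involution $W\in[\Phi]_1$ with $\mu(\supp W)=\alpha$ and $\supp W$ disjoint from $\supp U\cup\supp V$ (possible since $\mu(\supp U\cup\supp V)\leq 2\alpha$ and $3\alpha<1$), then chain through $W$---is exactly the paper's proof, and your construction of $W$ inside the complement is fine. The problem is the middle of your argument. The symmetry claim, that $W\in\overline{V^{[\Phi]_1'}}$ forces $V\in\overline{W^{[\Phi]_1'}}$, is not justified at this point (your parenthetical reason concerns elements that are genuinely conjugate, not limits of conjugates), and your patch via a fourth involution $W'$ has a measure-budget error: you ask for $W'$ of support measure $\alpha$ disjoint from $\supp U\cup\supp V\cup\supp W$, but when $\supp U$ and $\supp V$ happen to be disjoint that union has measure $3\alpha$, so existence of $W'$ needs $4\alpha\leq 1$, not $3\alpha<1$; for $\alpha\in\left]1/4,1/3\right[$ no such $W'$ exists. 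As written, the workaround therefore fails precisely in part of the range the lemma is supposed to cover.

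The fix is already contained in your own text and removes both the symmetry step and $W'$: apply the previous lemma with $W$ playing the role of ``$U$''. Since $V$ is an involution whose support has measure $\alpha=\mu(\supp W)\leq 1/2$ and is disjoint from $\supp W$, that lemma gives directly $V\in\overline{W^{[\Phi]_1'}}$. Combined with $W\in\overline{U^{[\Phi]_1'}}$ (the lemma applied to $U$) and the fact that $\overline{U^{[\Phi]_1'}}$ is closed and conjugation-invariant---so once it contains $W$ it contains all of $\overline{W^{[\Phi]_1'}}$---you conclude $V\in\overline{U^{[\Phi]_1'}}$. This is precisely the paper's two-line argument, and you already invoke this transitivity in your first paragraph (``which lies in the closure of the conjugacy class of $U$''), so no inversion of the closure relation and no fourth involution are ever needed.
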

\begin{proof}
Let $V\in[\Phi]_1$ have a support of measure $\alpha$. Observe that $\mu(\supp U\cup \supp V)<2/3$. 
By ergodicity, we find an involution $W\in[\mathcal R_\Phi]$ whose support has measure 1/3 and is disjoint from $\supp U\cup \supp V$. By shrinking down $W$, we may actually assume that $W\in[\Phi]_1$ and that $\mu(\supp W)=\alpha$.

By the previous lemma $W$ belongs to the closure of the conjugacy class of $U$ and $V$ belongs to the closure of the conjugacy class of $W$ so $V$ belongs to the closure of the conjugacy class of $U$. 
\end{proof}

\begin{lem}\label{lem: smaller support}
Let $\Phi$ be an ergodic graphing and let $U\in [\Phi]_1$ be an involution whose support has measure $\alpha<1/3$. Then every involution whose support has measure $\leq 2\alpha$ is the product of $2$ elements from the closure of the $[\Phi]_1'$ conjugacy class of $U$.
\end{lem}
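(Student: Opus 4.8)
The plan is to write $V$, with $\mu(\supp V)=\beta\le 2\alpha$, as a product $V=V_1V_2$ of two commuting involutions of $[\Phi]_1$ with disjoint supports of measure $\beta/2$ each, then to pad both factors by one and the same involution $W$ of support $\gamma:=\alpha-\beta/2\ge 0$ disjoint from $\supp V$, so that $A:=V_1W$ and $B:=V_2W$ are involutions whose supports have measure exactly $\beta/2+\gamma=\alpha$. Then $AB=V_1WV_2W=V_1V_2W^2=V$, because $W$ commutes with $V_2$ (disjoint supports) and $W^2=1$; and since $\alpha<1/3$, Lemma~\ref{lem:closure conjugacy class contains involutions} puts both $A$ and $B$ in the closure of the $[\Phi]_1'$-conjugacy class of $U$, which is what we want.

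To split $V$, pick a measurable fundamental domain $D$ for the free involution $V|_{\supp V}$, so that $\mu(D)=\beta/2$; partition $D=D_1\sqcup D_2$ into two halves, set $S_i=D_i\cup V(D_i)$ (a $V$-invariant set of measure $\beta/2$) and let $V_i$ agree with $V$ on $S_i$ and be the identity off $S_i$. Since $d_\Phi(x,V_i(x))\le d_\Phi(x,V(x))$ for every $x$, each $V_i$ lies in $[\Phi]_1$, and as $S_1,S_2$ are disjoint we get $V_1V_2=V_2V_1=V$. To produce $W$: if $\gamma=0$, i.e. $\beta=2\alpha$, take $W=\id$ (so $A=V_1$, $B=V_2$ already have support of measure $\alpha$); otherwise, since $\gamma<\alpha<1/3<1-\beta=\mu(X\setminus\supp V)$, ergodicity furnishes an involution in $[\mathcal R_\Phi]$ supported off $\supp V$ with support of measure $1/3$, which one shrinks down to an element of $[\Phi]_1$ still having support of measure $\ge\gamma$, and then restricts to a $W$-invariant subset of measure exactly $\gamma$. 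Defining $A=V_1W$ and $B=V_2W$ and running the computation above completes the argument.

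The one delicate point is obtaining the padding involution $W$ \emph{inside $[\Phi]_1$} rather than merely in $[\mathcal R_\Phi]$: an involution with prescribed small support off $\supp V$ is immediate from ergodicity, but it need not be $d_\Phi$-integrable. This is exactly the difficulty already dealt with in the proof of Lemma~\ref{lem:closure conjugacy class contains involutions}, by intersecting the support with the $d_\Phi$-bounded pieces $\{x:d_\Phi(x,W(x))\le n\}$ and letting $n\to\infty$; I would simply invoke that manoeuvre. Everything else is elementary bookkeeping with measures, using only $\alpha<1/3$ and $\beta\le 2\alpha$.
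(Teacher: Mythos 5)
Your argument is correct and is essentially the paper's own proof: split $V$ into two $V$-invariant pieces of measure $\beta/2$, multiply each by the same involution $W$ of support measure $\alpha-\beta/2$ disjoint from $\supp V$, and apply Lemma~\ref{lem:closure conjugacy class contains involutions} to both factors. The extra details you supply (the fundamental-domain splitting, the $\beta=2\alpha$ case, and the shrinking manoeuvre to get $W\in[\Phi]_1$) are all sound and merely make explicit what the paper leaves implicit.
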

\begin{proof}
Let $V\in[\Phi]_1$ be an involution whose support has measure $\beta \leq 2\alpha$. 
Cut the support of $V$ into two disjoint $V$-invariant sets $A_1$ and $A_2$ of measure $\beta/2$, and let $W\in[\Phi]_1$ be an involution with support of measure $\alpha-\beta/2$ disjoint from the support of $V$ (in particular, $W$ commutes with $V$).

Then $V=V_{A_1}V_{A_2}=(V_{A_1}W)(V_{A_2}W)$ and by the previous lemma both $V_{A_1}W$ and $V_{A_2}W$ belong to the closure of the conjugacy class of $U$. 
\end{proof}

\begin{thm}\label{thm: TBNG for amenable}
Let $\Phi$ be a hyperfinite ergodic graphing. Then $[\Phi]_1'$ has TBNG. A normal generation function is given by $n(T)= 2+2\lceil \frac{7}{2\mu(\supp T)}\rceil$.
\end{thm}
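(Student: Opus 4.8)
The plan is to combine Theorem \ref{thm: dense products of 2 invol} with the lemmas on products of conjugates of involutions, using a counting argument to track how many conjugacy classes are needed. First I would fix a non-trivial $T\in[\Phi]_1'$ and set $\alpha:=\mu(\supp T)>0$. Since the target sets $(T^{\pm G})^{\cdot n}$ are closed under taking conjugates and their closures are conjugation-invariant, it suffices to show that some fixed closure $\overline{(T^{\pm G})^{\cdot n(T)}}$ contains a dense subgroup of $[\Phi]_1'$; by Theorem \ref{thm: dense products of 2 invol} the natural candidate dense subgroup is $\bigcup_n[\mathcal S_n]$, whose elements are all products of two involutions. So the key reduction is: express an arbitrary involution $V\in[\Phi]_1$ (which lies in $[\Phi]_1'$ by aperiodicity) as a bounded product of elements drawn from $\overline{T^{\pm G}}$, with the bound depending only on $\alpha$, and then double it to handle products of two involutions.

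The crucial intermediate step is to get a single \emph{involution} into $\overline{T^{\pm G}}^{\cdot m}$ for some controlled $m$. Here I would use the commutator identity: for any measure-preserving transformation, a conjugate-type trick shows that if $T$ moves a set $A$ off itself then $[T,S]=TST\inv S\inv$ for a suitable involution-or-transformation $S$ supported near $A$ produces an involution supported on a set comparable to $\supp T$; more concretely one uses that $T\cdot (ST\inv S\inv)$ is a product of two elements of $T^{\pm G}$ and can be arranged to be an involution of support bounded below by a fixed fraction of $\alpha$ (this is the standard "$g\cdot hg\inv h\inv$ is an involution when $hg\inv h\inv$ is a well-placed conjugate" argument, cf. the symmetry tricks in \cite{Dow15}). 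Thus there is some universal constant $c$ and an involution $V_0$ with $\mu(\supp V_0)\geq c\alpha$ lying in $(T^{\pm G})^{\cdot 2}$; combined with Lemma \ref{lem:closure conjugacy class contains involutions} and Lemma \ref{lem: smaller support}, once we have \emph{one} involution of support $\geq c\alpha$ in $\overline{(T^{\pm G})^{\cdot 2}}$ we can reach every involution of support $\leq 2c\alpha$ as a product of two of its conjugates, i.e. in $\overline{(T^{\pm G})^{\cdot 4}}$.

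Next I would iterate: an arbitrary involution $V\in[\Phi]_1$ of support measure $\leq 1$ can be split (by cutting its support into $\lceil 1/(2c\alpha)\rceil$ pieces, each a $V$-invariant set of small measure) into a product of $\lceil 1/(2c\alpha)\rceil$ commuting involutions each of support $\leq 2c\alpha$, each reachable in $\overline{(T^{\pm G})^{\cdot 4}}$. This puts every involution into $\overline{(T^{\pm G})^{\cdot 4\lceil 1/(2c\alpha)\rceil}}$, hence every product of two involutions into twice that, and by Theorem \ref{thm: dense products of 2 invol} this closure is all of $[\Phi]_1'$. Matching the stated bound $n(T)=2+2\lceil\frac{7}{2\mu(\supp T)}\rceil$ suggests the sharp route uses measure $1/3$ as the key threshold (from Lemmas \ref{lem:closure conjugacy class contains involutions} and \ref{lem: smaller support}): one produces an involution of support $\geq 1/3$-ish directly, writes any involution as a product of (about) $\lceil 7/(2\alpha)\rceil/2$ pieces with a $+1$ from the two-involutions decomposition and a leftover factor of $2$, then doubles. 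I expect the main obstacle to be the first step — carefully producing an involution of controlled support measure (a definite fraction of $\alpha$, not just positive) as an honest product of two conjugates of $T^{\pm1}$, handling the case where $T$ itself has small support or acts with short $\Phi$-orbits — everything after that is bookkeeping with the three support lemmas already established.
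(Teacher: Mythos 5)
Your overall route is the paper's route: manufacture one involution lying in $(T^{\pm G})^{\cdot 2}$ (with $G=[\Phi]_1'$) whose support measure is controlled, use Lemmas \ref{lem:closure conjugacy class contains involutions} and \ref{lem: smaller support} to write an arbitrary involution as a bounded product of elements of the closure of its conjugacy class, and finish with the dense subgroup of products of two involutions from Theorem \ref{thm: dense products of 2 invol}. But the step you explicitly defer --- ``carefully producing an involution of controlled support measure as an honest product of two conjugates of $T^{\pm 1}$'' --- is precisely the heart of the proof, and as written your proposal contains no argument for it: ``a suitable involution-or-transformation $S$ supported near $A$'' that ``can be arranged'' is not a construction, and a lower bound $\mu(\supp V_0)\geq c\alpha$ is in any case not quite the right demand, since the two lemmas need the support measure to be $<1/3$ and the piece-counting needs it to be a definite (known) fraction of $\alpha$.

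The missing construction is short but genuinely load-bearing, and the case-splitting you anticipate (small support of $T$, short orbits of $T$) is not needed. Take $A\subseteq\supp T$ maximal such that $A$ and $T(A)$ are disjoint; maximality forces $\supp T\subseteq T^{-1}(A)\cup A\cup T(A)$, hence $\mu(A)\geq\mu(\supp T)/3$, and this works for every nontrivial $T$. By ergodicity/aperiodicity of $\mathcal R_\Phi$ one can choose an involution $U\in[\Phi]_1$ with $\supp U\subseteq A$ and $\mu(\supp U)=\mu(\supp T)/7$; this uses the equivalence relation, not the dynamics of $T$, so short $T$-orbits are irrelevant. Then $TUT^{-1}$ is an involution supported in $T(A)$, disjoint from $\supp U$, so $[T,U]=(TUT^{-1})U^{-1}$ is a nontrivial involution of support measure exactly $2\mu(\supp T)/7<1/3$, and $[T,U]=T\cdot(UT^{-1}U^{-1})$ indeed lies in $(T^{\pm G})^{\cdot 2}$. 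With this in hand your bookkeeping goes through essentially as in the paper (which cuts the support of an arbitrary involution $V$ into $V$-invariant pieces of measure $\mu(\supp[T,U])$ plus one remainder handled by Lemma \ref{lem: smaller support}, giving at most $1+\lceil 7/(2\mu(\supp T))\rceil$ conjugates of $[T,U]$ per involution, whence the stated normal generation function); without it, the proposal does not establish TBNG.
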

\begin{proof}
Let $T\in[\Phi]_1'$. Let $A\subseteq \supp T$ be a maximal subset such that $A$ and $T(A)$ are disjoint. Then by maximality $\supp T\subseteq T\inv(A)\cup A\cup T(A)$, and thus  $\mu(A)\geq \mu(\supp T)/3$. 

We may now find an involution $U\in[\Phi]_1$ whose support is contained in $A$ and has measure $\mu(\supp T)/7$. 
Now $[T,U]$ is an involution whose support has measure $\mu(\supp [T,U])=2\mu(\supp T)/7<1/3$. 

Let $V$ be an arbitrary involution, we cut down its support into $V$-invariant pieces of measure $\mu(\supp [T,U])$ plus a remaining piece of measure $<\mu(\supp [T,U])$. Observe that $V$ is equal to the product of the transformations it induces on these pieces. There are at most $\lceil\frac 1{\mu(\supp [T,U])}\rceil=\lceil\frac 7{2\mu(\supp T)}\rceil$ such pieces, and by Lem. \ref{lem:closure conjugacy class contains involutions} and Lem. \ref{lem: smaller support} applied to the involutions induced by $V$ on the pieces, we can then write $V$ as a product of at most $1+\lceil\frac 7{2\mu(\supp T)}\rceil$  elements of the closure of the conjugacy class of $[T,U]$. 

Since there is a dense subgroup of $[\Phi]_1'$ whose elements are products of $2$ involutions, we have a dense subgroup of $[\Phi]_1'$ all whose elements are in $\overline{(T^{\pm[\Phi]_1'})^k}$ where
$$k=2\left(1+\lceil\frac 7{2\mu(\supp T)}\rceil\right)$$
So  $[\Phi]_1'$ has TBNG.
\end{proof}

\begin{rmq}
To get STBNG via the same approach, we would need to know that each element of $[\Phi]_1'$ is a product of at most $k$ involutions for some fixed $k$. We do not know wether this is true, and we also cannot exclude that every element of the derived $\LL^1$ full group is a product of two involutions.
\end{rmq}

\begin{cor}\label{cor: derived L1 of T has TBNG but not simple}
Let $T$ be an ergodic measure-preserving transformation. Then $[T]_1'$ has TBNG but is not simple. 
\end{cor}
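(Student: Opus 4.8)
The plan is to derive Corollary~\ref{cor: derived L1 of T has TBNG but not simple} directly from Theorem~\ref{thm: TBNG for amenable} together with facts about $\LL^1$ full groups of single transformations already established in \cite{l1fullgpsI}. First I would recast the single ergodic measure-preserving transformation $T$ as a graphing: set $\Phi=\{T\}$, a graphing on $(X,\mu)$ consisting of one partial isomorphism defined on all of $X$. Since $T$ is ergodic, the equivalence relation $\mathcal R_\Phi=\mathcal R_T$ is ergodic, and since $\mathcal R_T$ is generated by a single automorphism it is hyperfinite (this is the classical Dye-type fact that $\Z$-actions generate hyperfinite equivalence relations). Thus $\Phi$ is a hyperfinite ergodic graphing and $[\Phi]_1'=[T]_1'$ by definition of the derived $\LL^1$ full group.

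The TBNG half is then immediate: Theorem~\ref{thm: TBNG for amenable} applies verbatim to $\Phi=\{T\}$ and yields that $[T]_1'$ has topological bounded normal generation, with the explicit normal generation function $n(U)=2+2\lceil\frac{7}{2\mu(\supp U)}\rceil$. For the non-simplicity half, I would invoke the structural result from \cite{l1fullgpsI} that the derived $\LL^1$ full group of an ergodic transformation is a proper dense subgroup of $[T]_1$ which is nevertheless topologically simple but not abstractly simple --- indeed $[T]_1$ carries a nontrivial homomorphism (an ``index map'' type invariant) whose restriction detects a proper nontrivial normal subgroup of $[T]_1'$, or alternatively one cites directly the statement in \cite{l1fullgpsI} that $[T]_1'$ is not simple. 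Since this is stated as already known in the introduction of the present excerpt (``which was already known to be non-simple but topologically simple \cite{l1fullgpsI}''), the corollary follows by combining the two halves.

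The only point requiring a word of care is the reduction ``single ergodic transformation $\leadsto$ hyperfinite ergodic graphing'': one must observe that the graphing $\Phi=\{T\}$ generates an aperiodic equivalence relation (ergodicity of $T$ forces all orbits to be infinite, up to the usual null-set conventions, since a periodic ergodic transformation would have to act on a finite space), so that the hypotheses of all the lemmas feeding into Theorem~\ref{thm: TBNG for amenable} --- in particular aperiodicity and hyperfiniteness --- are met. I expect this to be entirely routine; the substantive content has already been carried out in Theorem~\ref{thm: TBNG for amenable} and in \cite{l1fullgpsI}, so the corollary is genuinely a one-paragraph deduction with no real obstacle.
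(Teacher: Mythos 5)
Your proposal is correct and follows essentially the same route as the paper: it invokes Dye's theorem to see that the graphing $\{T\}$ is hyperfinite (and ergodic, hence aperiodic), applies Theorem~\ref{thm: TBNG for amenable} for TBNG, and cites the non-simplicity of $[T]_1'$ established in \cite{l1fullgpsI}. The only difference is cosmetic: the paper cites the non-simplicity statement directly rather than sketching the underlying index-map argument.
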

\begin{proof}
By \cite[Thm. 1]{MR0131516} the graphing $\{T\}$ is hyperfinite, so by Theorem \ref{thm: TBNG for amenable} the derived $\LL^1$ full group $[T_\alpha]_1'$ has TBNG. But by \cite[Thm. 4.26]{l1fullgpsI} we also have that $[T]'_1$ is not simple.
\end{proof}

\section{Reconstruction for the derived \texorpdfstring{$\LL^1$}{L1}  full group}\label{sec_recon}

Recall that two measure-preserving transformations $T$ and $T'$ are \textbf{conjugate} if there is a measure-preserving transformation $S$ such that $T=ST'S\inv$ or $T\inv=S T'S\inv$. In \cite{l1fullgpsI} it was shown that when the $\LL^1$ full groups of two ergodic measure-preserving transformations $T$ and $T'$ are isomorphic, then $T$ and $T'$ are flip-conjugate.
The purpose of this section is to prove the same result for the \textit{derived} $\LL^1$ full group. 

Let us first note that by \cite[Prop. 3.16]{l1fullgpsI} and a reconstruction theorem of Fremlin as stated in \cite[Thm. 3.17]{l1fullgpsI}, we have the following result:

\begin{lem}
Let $\Phi$ and $\Psi$ be a aperiodic graphings, and let $\rho:[\Phi]_1'\to[\Psi]_1'$ be a group isomorphism. Then there is a non-singular transformation $S$ such that for all $U\in[\Phi]_1'$ we have 
$\rho(U)=SUS\inv$.\qed
\end{lem}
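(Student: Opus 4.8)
The plan is to present the statement as a direct application of Fremlin's abstract reconstruction theorem, in the packaged form recorded as \cite[Thm. 3.17]{l1fullgpsI}. That theorem takes two subgroups of the non-singular automorphism groups of standard probability spaces which satisfy a certain richness condition, and asserts that any abstract group isomorphism between them is automatically spatially implemented by a non-singular Borel isomorphism of the underlying spaces. So the whole proof reduces to two moves: first, check that the derived $\LL^1$ full group $[\Phi]_1'$ of an aperiodic graphing falls within the scope of Fremlin's theorem; second, quote the theorem to produce the non-singular $S$ with $\rho(U)=SUS\inv$. The verification in the first move is precisely what \cite[Prop. 3.16]{l1fullgpsI} supplies, so on our side the argument is short and consists mainly of assembling these two cited inputs.

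Concretely, I would first recall what Fremlin's richness condition demands: that the group contain enough involutions (or periodic elements) to \emph{detect} the measure algebra, so that every set of positive measure carries a nontrivial group element supported inside it, and so that commutation and conjugation relations among involutions recover disjointness and inclusion of their supports. The essential input is aperiodicity: by \cite[Lem. 3.10]{l1fullgpsI} every involution of $[\Phi]_1$ already belongs to $[\Phi]_1'$, so the derived group — although it is only the closure of a commutator subgroup, and hence could a priori be much smaller than the full group — is in fact not impoverished at all and retains all the involutions the reconstruction needs. This is exactly the content \cite[Prop. 3.16]{l1fullgpsI} packages, placing $[\Phi]_1'$ in the hypotheses of Fremlin's theorem. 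Applying the theorem then yields a non-singular $S$ inducing, on the measure algebra, the isomorphism that $\rho$ induces on the lattice of supports; and since $\rho$ and conjugation by $S$ are both group homomorphisms that agree on the involutions, which topologically generate $[\Phi]_1'$ by Thm.~\ref{thm: topo gen by I_phi}, they agree on all of $[\Phi]_1'$, giving $\rho(U)=SUS\inv$ as claimed.

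The genuinely hard part lives entirely inside Fremlin's theorem rather than in this paper: the isomorphism $\rho$ is a bare group isomorphism, carrying no continuity and no a priori measure-theoretic data, so the whole measure-algebra structure — supports, disjointness, and the action itself — must be reconstructed from purely group-theoretic relations. The only obstacle left on our side is confirming that the \emph{derived} group is rich enough for that reconstruction to apply, and this is precisely where aperiodicity is indispensable and where \cite[Prop. 3.16]{l1fullgpsI} does the work; once that is in hand, the lemma is immediate.
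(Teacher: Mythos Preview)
Your overall approach matches the paper's exactly: cite \cite[Prop.~3.16]{l1fullgpsI} to place $[\Phi]_1'$ within the hypotheses of Fremlin's reconstruction theorem, then invoke \cite[Thm.~3.17]{l1fullgpsI}. The paper says no more than this.

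Your attempted elaboration of the mechanism, however, has a genuine gap. You propose that Fremlin's theorem produces an $S$ matching supports, that $\rho$ and conjugation by $S$ therefore agree on involutions, and that one then extends to all of $[\Phi]_1'$ via topological generation (Thm.~\ref{thm: topo gen by I_phi}). The last step fails: $\rho$ is only an \emph{abstract} group isomorphism, with no continuity assumed, so agreement on a topologically generating set does not propagate to the closure. (There is also a smaller issue one step earlier: knowing that $S$ carries $\supp U$ to $\supp\rho(U)$ does not by itself force $SUS^{-1}=\rho(U)$ even for an involution $U$, since distinct involutions can share the same support.)

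Neither of these steps is actually needed. Fremlin's theorem, in the form recorded as \cite[Thm.~3.17]{l1fullgpsI}, already outputs the full conjugation formula $\rho(U)=SUS^{-1}$ for \emph{every} element $U$ of the group, not merely a measure-algebra map on supports. So once \cite[Prop.~3.16]{l1fullgpsI} verifies the hypotheses, the conclusion is immediate and no extension argument is required.
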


In the ergodic case, we can upgrade this as in \cite[Cor. 3.18]{l1fullgpsI}.

\begin{cor}
Let $\Phi$ and $\Psi$ be a ergodic graphings, and let $\rho:[\Phi]_1'\to[\Psi]_1'$ be a group isomorphism. Then there is a measure-preserving transformation $S$ such that for all $U\in[\Phi]_1'$ we have 
$\rho(U)=SUS\inv$. 
\end{cor}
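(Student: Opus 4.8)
The plan is to promote the non-singular conjugating transformation $S$ produced by the previous lemma to a genuinely measure-preserving one. Concretely, I start from the lemma: there is a non-singular $S$ with $\rho(U)=SUS\inv$ for all $U\in[\Phi]_1'$. The point is that $S$ normalizes the action of $[\Phi]_1'$ on $(X,\mu)$, and since $[\Phi]_1'$ contains enough measure-preserving transformations to be ``large'' on every positive-measure set (by ergodicity of $\Phi$ and $\Psi$, together with Theorem \ref{thm: topo gen by I_phi} or Theorem \ref{thm: dense products of 2 invol} guaranteeing an abundance of involutions in $[\Phi]_1'$), the conjugating map $S$ cannot distort the measure. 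This is exactly the kind of upgrade carried out in \cite[Cor. 3.18]{l1fullgpsI}, and I would follow that argument essentially verbatim.

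In more detail, the key steps are as follows. First, push the measure: let $\nu=S_*\mu$, a measure on $Y$ equivalent to the measure $\mu_\Psi$ underlying $[\Psi]_1'$ (non-singularity of $S$). Second, observe that conjugation by $S$ sends $[\mathcal R_\Phi]$-type measure-preserving involutions to elements of $[\Psi]_1'$; since every element of $[\Psi]_1'$ preserves $\mu_\Psi$, the transformation $SUS\inv$ preserves $\mu_\Psi$, hence $U$ preserves $S^*\mu_\Psi$ (the pullback). Running this over all involutions $U\in[\Phi]_1'$ supported on an arbitrary positive-measure set $A\subseteq X$ — there are plenty by ergodicity, as used already in Lemma \ref{lem:closure conjugacy class contains involutions} — forces $S^*\mu_\Psi$ to be a multiple of $\mu$ on $A$; since $A$ was arbitrary, $S^*\mu_\Psi$ is a scalar multiple of $\mu$ globally, and as both are probability measures the scalar is $1$. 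Thus $S$ is measure-preserving, and the displayed identity $\rho(U)=SUS\inv$ from the lemma is unchanged. This establishes the corollary.

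The main obstacle is genuinely the measure-rigidity step: showing that a non-singular $S$ conjugating $[\Phi]_1'$ onto $[\Psi]_1'$ must be measure-preserving. The subtlety is that $[\Phi]_1'$ is only the \emph{closed derived subgroup}, not the full $[\Phi]_1$, so one must make sure it still contains enough involutions supported on arbitrarily small sets to pin down the measure — but this is exactly what the aperiodicity-plus-ergodicity inputs and \cite[Lem. 3.10]{l1fullgpsI} provide, and indeed it is the content of the analogous ergodic upgrade in \cite{l1fullgpsI}. Once that is in hand, everything else is bookkeeping, so I would present the proof as a short reduction to \cite[Cor. 3.18]{l1fullgpsI}.
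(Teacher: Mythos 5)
Your proposal is correct and matches the paper's approach: the paper's proof is literally a reference to \cite[Cor. 3.18]{l1fullgpsI}, and your argument reproduces exactly that upgrade — pull back the measure through the non-singular $S$, note it is invariant under the abundance of involutions in $[\Phi]_1'$, and use ergodicity to force the Radon--Nikodym derivative to be constant, hence $1$ since both are probability measures.
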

\begin{proof}
Same as the proof of \cite[Cor. 3.18]{l1fullgpsI}.
\end{proof}

We then have the following reconstruction theorem, whose proof is inspired from a similar result of Bezuglyi and Medynets for topological full groups \cite[Lem. 5.12]{MR2353913}.

\begin{thm}
Let $T_a$ and $T_b$ be two ergodic measure-preserving transformations. Then the following are equivalent:
\begin{enumerate}[(i)]
\item $T_a$ and $T_b$ are flip-conjugate;
\item $[T_a]_1'$ and $[T_b]_1'$ are topologically isomorphic.
\item $[T_a]_1'$ and $[T_b]_1'$ are abstractly isomorphic;
\end{enumerate}
\end{thm}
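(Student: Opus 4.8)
The implications $(i)\Rightarrow(ii)\Rightarrow(iii)$ are essentially formal. If $T_a$ and $T_b$ are flip-conjugate, say $S T_a S\inv = T_b^{\pm 1}$ for some $S\in\Aut(X,\mu)$, then conjugation by $S$ maps $[T_a]_1$ onto $[T_b]_1$ (the graph metrics $d_{\{T_a\}}$ and $d_{\{T_b\}}$ are transported to each other, so $\LL^1$ conditions are preserved), hence it maps the derived group onto the derived group and, being a homeomorphism for the right-invariant metrics, it maps the closure of the derived group onto the closure of the derived group; thus $[T_a]_1'\cong[T_b]_1'$ as topological groups. And $(ii)\Rightarrow(iii)$ is trivial. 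So the entire content is $(iii)\Rightarrow(i)$.

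For $(iii)\Rightarrow(i)$, first apply the Corollary just proved: an abstract isomorphism $\rho:[T_a]_1'\to[T_b]_1'$ is implemented by a measure-preserving transformation $S$, i.e. $\rho(U)=SUS\inv$ for all $U\in[T_a]_1'$. Writing $\RR_a=\mathcal R_{\{T_a\}}$ and $\RR_b=\mathcal R_{\{T_b\}}$, conjugation by $S$ therefore carries $[\RR_a]\cap[T_a]_1'$ into $[\RR_b]$; since every involution and more generally every periodic element of $[\RR_a]$ lies in $[T_a]_1'$ (aperiodicity of $\{T_a\}$, using \cite[Lem.~3.10]{l1fullgpsI}), and periodic elements generate $[\RR_a]$, we get $S\RR_a S\inv = \RR_b$ as measure-preserving equivalence relations. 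So $S$ conjugates the full group $[\RR_a]$ onto $[\RR_b]$, and it remains to recover the flip-conjugacy of the \emph{generators} $T_a$ and $T_b$ from the isomorphism of derived $\LL^1$ full groups, not merely of full groups.

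The key is to characterize, inside $[T_a]_1'$, the elements that come from $T_a$ itself (or a power thereof) in a purely group-theoretic way — this is where the argument follows Bezuglyi--Medynets \cite[Lem.~5.12]{MR2353913}. The plan is to identify $T_a$ up to flip via its behaviour relative to the family of "induced transformations" or via a metric/length characterization: the right-invariant metric $\tilde d_{\{T_a\}}$ is determined by the abstract group together with $S$ (it is transported to a right-invariant metric on $[T_b]_1'$), and for a generic cylinder-type element the value $\int_X d_{\{T_a\}}(x,U(x))\,d\mu$ counts the minimal number of applications of $T_a^{\pm 1}$; distinguishing $T_a$ and $T_a\inv$ from other elements of $\RR_a$-word-length one, and pinning down that $S T_a S\inv$ must itself generate $\RR_b$ while being at $\tilde d$-distance one from the identity on its support, forces $ST_aS\inv(x)\in\{T_b(x),T_b\inv(x)\}$ almost everywhere; ergodicity of $T_b$ then upgrades the a.e.\ choice to a global one, yielding $ST_aS\inv=T_b^{\pm1}$.

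\textbf{Main obstacle.} The delicate point is the last step: passing from "$S$ conjugates $\RR_a$ to $\RR_b$" to "$S$ flip-conjugates $T_a$ to $T_b$". Knowing the full groups are conjugate is far weaker than flip-conjugacy of the generating transformations, so one must genuinely exploit the extra structure of the $\LL^1$ full group — concretely, the graph metric $d_{\{T_a\}}$, which records the $\Z$-labelling of the orbits and is reconstructible from $\tilde d_{\{T_a\}}$, hence from the topological group. Making rigorous that an element $U$ with $d_{\{T_a\}}(x,U(x))\le 1$ everywhere and $\supp U$ of full measure, satisfying the relevant commutation relations with enough induced involutions, must equal $T_a^{\pm 1}$ a.e.\ is the technical heart, and the ergodicity of $T_b$ is what prevents the pointwise sign of the flip from varying measurably across $X$.
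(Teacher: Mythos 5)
Your reduction to (iii)$\impl$(i) and your first step (implementing $\rho$ by a measure-preserving $S$ via the preceding corollary) match the paper, but the strategy you propose for the heart of the argument does not work. The claim you aim to make rigorous --- that the length/metric structure ``forces $ST_aS\inv(x)\in\{T_b(x),T_b\inv(x)\}$ almost everywhere'' and hence $ST_aS\inv=T_b^{\pm1}$ --- is false in general: take $T_a=T_b=T$ and $S\in[T]_1$ not commuting with $T$. Conjugation by such an $S$ preserves $[T]_1$, hence its derived group and its closure, so it implements a (topological) automorphism of $[T]_1'$, yet $STS\inv$ is typically not $T^{\pm1}$ pointwise; the transported metric on $[T_b]_1'$ is the one attached to the graphing $\{ST_aS\inv\}$, not to $\{T_b\}$, so no group-theoretic characterization can pin $ST_aS\inv$ down to $T_b^{\pm1}$. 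The theorem only asserts that \emph{some} transformation flip-conjugates $T_a$ to $T_b$, not the implementing $S$ itself, and your plan conflates the two. A further obstruction to your ``transport $T_a$ and read off its length'' idea is that $T_a\notin[T_a]_1'$ (it has nonzero index), so $T_a$ is not even in the domain of $\rho$.

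The paper's route, which you are missing, is: write $X=A\sqcup T_a(A)\sqcup B\sqcup T_a(B)\sqcup T_a^2(B)$ and decompose $T_a$ piecewise into five involutions $U_1,\dots,U_5$ (each exchanging $A_i$ and $T_a(A_i)$ via $T_a$), which \emph{do} lie in $[T_a]_1'$; since $SU_iS\inv\in[T_b]_1'\leq[T_b]_1$, the transformation $ST_aS\inv$ is a cut-and-paste of finitely many elements of $[T_b]_1$, hence $ST_aS\inv\in[T_b]_1$, and symmetrically $T_b\in[ST_aS\inv]_1$. Thus $T_b$ and $ST_aS\inv$ share the same orbits with integrable cocycles, and Belinskaya's theorem (\cite[Thm.~4.1]{l1fullgpsI}) is the deep ingredient that converts this into flip-conjugacy of $T_a$ and $T_b$ (by a possibly different conjugator). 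Mere orbit equivalence, which is all your intermediate step $S\RR_aS\inv=\RR_b$ records, is vacuous here by Dye's theorem; the integrability obtained from the five-involution trick plus Belinskaya is what cannot be bypassed by the metric argument you sketch.
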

\begin{proof}
The only non-trivial implication is (iii)$\impl$(i). Let $\rho:[T_a]_1'\to[T_b]_1'$ be a group isomorphism. By the previous corollary we find a measure-preserving transformation $S$ such that for all $U\in [T_a]_1'$ one has 
$$\rho(U)=SUS\inv.$$
We then find $A,B\subseteq X$ such that $X=A\sqcup T_a(A)\sqcup B\sqcup T_a(B)\sqcup T_a^2(B)$ (see e.g. \cite[Prop. 2.7]{l1fullgpsI}) and let
$$A_1=A, A_2=T_a(A), A_3=B, A_4=T_a(B), A_5=T_a^2(B).$$
Consider then the involutions $U_1, \cdots, U_5$ defined by $$U_i(x)=\left\{\begin{array}{cc}T_a(x) & \text{if }x\in A_i \\T_a\inv(x) & \text{if }x\in T_a(A_i) \\x & \text{else.}\end{array}\right.$$
These involutions all belong to the $\LL^1$ full group of $T_a$, so they actually belong to $[T_a]_1'$ by \cite[Lem. 3.10]{l1fullgpsI}. Now for every $x\in A_i$ we have $T_a(x)=U_i(x)$ so for every $x\in S(A_i)$ we have $ST_aS\inv(x)=SU_iS\inv(x)$. Since for each $i\in\{1,...,5\}$ we have $SU_iS\inv\in[T_b]'_1\leq [T_b]_1$, we conclude that $ST_aS\inv$ can be obtained by cutting and pasting finitely many elements of $[T_b]_1$, hence $ST_aS\inv\in[T_b]_1$.

By the same argument  we also have $T_b\in [ST_aS\inv]_1$ so $T_b$ and $ST_aS\inv$ share the same orbits.  Belinskaya's theorem (as stated in \cite[Thm. 4.1]{l1fullgpsI}) now implies that $T_a$ and $T_b$ are flip-conjugate. 
\end{proof}

\begin{cor}\label{cor: continuum counterexamples}
There is a continuum of pairwise non-isomorphic non-simple Polish groups having TBNG. In particular, TBNG does not imply BNG.
\end{cor}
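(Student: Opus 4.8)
The plan is to combine the reconstruction theorem just proved with a supply of measure-preserving transformations that are pairwise non-flip-conjugate, yet all fall under the hypotheses of Corollary \ref{cor: derived L1 of T has TBNG but not simple}. So first I would invoke the fact that there is a continuum of pairwise non-flip-conjugate ergodic measure-preserving transformations of a standard probability space; this is classical and can be extracted, for instance, from the existence of uncountably many ergodic transformations with distinct entropy (Bernoulli shifts with different base entropies), or more cheaply from the fact that weak mixing, rigidity constants, spectral type, etc., give continuum-many conjugacy classes, and flip-conjugacy only identifies pairs. Let $(T_r)_{r\in I}$ be such a family, indexed by a set $I$ of cardinality continuum.

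Next I would apply Corollary \ref{cor: derived L1 of T has TBNG but not simple} to each $T_r$: since each $T_r$ is ergodic measure-preserving, the derived $\LL^1$ full group $[T_r]_1'$ has TBNG and is not simple (in particular fails BNG, as BNG implies simplicity). Each $[T_r]_1'$ is a Polish group, being a closed subgroup of the Polish group $[\{T_r\}]_1$. Then I would apply the reconstruction theorem above: if $[T_r]_1'$ and $[T_{r'}]_1'$ were (abstractly, hence a fortiori topologically) isomorphic, then $T_r$ and $T_{r'}$ would be flip-conjugate, contradicting the choice of the family unless $r=r'$. Hence the groups $([T_r]_1')_{r\in I}$ are pairwise non-isomorphic, giving a continuum of pairwise non-isomorphic non-simple Polish groups with TBNG. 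The final sentence ``TBNG does not imply BNG'' is then immediate since these groups are not simple.

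The only real content beyond bookkeeping is producing the continuum-sized family of pairwise non-flip-conjugate ergodic transformations, and I expect that to be the main (and only mild) obstacle; it is standard but should be cited carefully — Bernoulli shifts $\{B_p : p\in(0,1/2)\}$ have pairwise distinct entropies $h(B_p)=-p\log p-(1-p)\log(1-p)$ which are moreover not invariant-switched by the flip since $h(T)=h(T^{-1})$ forces nothing new but the entropies are genuinely distinct across the family, so no two are conjugate or flip-conjugate. Everything else is a direct pipeline through results already established in the excerpt, so the proof is short.

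\begin{proof}
Choose a family $(T_r)_{r\in(0,1/2)}$ of Bernoulli shifts, where $T_r$ has base $\{r,1-r\}$; these are ergodic measure-preserving transformations of a standard probability space with pairwise distinct Kolmogorov--Sinai entropies $h(T_r)=-r\log r-(1-r)\log(1-r)$. Since $h(T)=h(T\inv)$, flip-conjugate transformations have the same entropy, so the $T_r$ are pairwise non-flip-conjugate. By Corollary \ref{cor: derived L1 of T has TBNG but not simple}, each $[T_r]_1'$ is a non-simple Polish group with TBNG, hence fails BNG. By the reconstruction theorem above, if $[T_r]_1'\cong[T_{r'}]_1'$ as abstract groups then $T_r$ and $T_{r'}$ are flip-conjugate, so $r=r'$. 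Thus $\left([T_r]_1'\right)_{r\in(0,1/2)}$ is a continuum of pairwise non-isomorphic non-simple Polish groups with TBNG.
\end{proof}
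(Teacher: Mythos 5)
Your proposal is correct and follows essentially the same pipeline as the paper: apply Corollary \ref{cor: derived L1 of T has TBNG but not simple} to a continuum of pairwise non-flip-conjugate ergodic transformations and then separate the resulting derived $\LL^1$ full groups via the reconstruction theorem. The only difference is the choice of witness family — you use Bernoulli shifts distinguished by entropy, while the paper uses irrational rotations $T_\alpha$, $\alpha\in]0,1/2[$, distinguished by the fact that $T_\alpha$ and $T_\beta$ are conjugate iff $\alpha=\pm\beta \bmod 1$ — and both choices work equally well.
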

\begin{proof}
Consider, for an irrational $\alpha\in]0,1/2[$, the rotation $T_\alpha(x)=x \mod 1$. By Cor. \ref{cor: derived L1 of T has TBNG but not simple} the derived $\LL^1$ full groups $[T_\alpha]_1'$ have TBNG and are not simple. Moreover all those groups are non-isomorphic  by the previous theorem and the fact that two irrational rotations $T_\alpha$ and $T_\beta$ are conjugate iff $\alpha=\pm\beta \mod 1$. 
\end{proof}


\bibliographystyle{alpha}
\bibliography{/Users/francoislemaitre/Dropbox/Maths/biblio}

\end{document}